\documentclass[10pt,conference]{ieeeconf}


\usepackage{mathrsfs}
\usepackage{graphics} 
\usepackage{amsmath} 
\usepackage{amssymb}  

\usepackage{amsthm}
\usepackage{cite}
\usepackage{bm}
\usepackage{acronym}
\usepackage{paralist}
\usepackage{float}
\usepackage{color}
\usepackage{epstopdf}
\usepackage{multicol}
\usepackage{tikz}
\usepackage{hyperref}
\usepackage{graphicx}
\usepackage{soul}
\usepackage{mathtools}
\usepackage{siunitx}
\usepackage{empheq}
\usepackage{accents}
\usepackage{subcaption}
\usepackage{mdframed}
\usepackage[export]{adjustbox}

\makeatletter
\hypersetup{colorlinks=true}
\AtBeginDocument{\@ifpackageloaded{hyperref}
  {\def\@linkcolor{blue}
  \def\@anchorcolor{red}
  \def\@citecolor{red}
  \def\@filecolor{red}
  \def\@urlcolor{red}
  \def\@menucolor{red}
  \def\@pagecolor{red}
\begingroup
  \@makeother\`%
  \@makeother\=%
  \edef\x{%
    \edef\noexpand\x{%
      \endgroup
      \noexpand\toks@{%
        \catcode 96=\noexpand\the\catcode`\noexpand\`\relax
        \catcode 61=\noexpand\the\catcode`\noexpand\=\relax
      }%
    }%
    \noexpand\x
  }%
\x
\@makeother\`
\@makeother\=
}{}}
\makeatother

\newtheorem{Theorem}{Theorem}
\newtheorem{Lemma}{Lemma}
\newtheorem{Problem}{Problem}
\newtheorem{Remark}{Remark}
\newtheorem{Corollary}{Corollary}

\newtheorem{Assumption}{Assumption}
\newtheorem{Definition}{Definition}

\DeclareMathOperator{\R}{\mathbb R}

\DeclareMathOperator*{\argmin}{arg\,min}

\newcommand{\bequ}{\begin{eqnarray}}
\newcommand{\eequ}{\end{eqnarray}}

\newcommand{\ubar}[1]{\underaccent{\bar}{#1}}

\IEEEoverridecommandlockouts

\def\BibTeX{{\rm B\kern-.05em{\sc i\kern-.025em b}\kern-.08em
    T\kern-.1667em\lower.7ex\hbox{E}\kern-.125emX}}

\begin{document}

\title{\LARGE \bf A Fixed-Time Stable Adaptation Law for Safety-Critical Control under Parametric Uncertainty}

\author{Mitchell Black \and Ehsan Arabi \and Dimitra Panagou
\thanks{
The authors would like to acknowledge the support of the National Science Foundation award number 1931982.}
\thanks{The authors are with the Department of Aerospace Engineering, University of Michigan, Ann Arbor, MI, USA; \texttt{\{mblackjr, earabi, dpanagou\}@umich.edu}.}
}
\maketitle


\begin{abstract}
We present a novel technique for solving the problem of safe control for a general class of nonlinear, control-affine systems subject to parametric model uncertainty. Invoking Lyapunov analysis and the notion of fixed-time stability (FxTS), we introduce a parameter adaptation law which guarantees convergence of the estimates of unknown parameters in the system dynamics to their true values within a fixed-time independent of the initial parameter estimation error. We then synthesize the adaptation law with a robust, adaptive control barrier function (RaCBF) based quadratic program to compute safe control inputs despite the considered model uncertainty. To corroborate our results, we undertake a comparative case study on the efficacy of this result versus other recent approaches in the literature to safe control under uncertainty, and close by highlighting the value of our method in the context of an automobile overtake scenario.
\end{abstract}


\section{Introduction}




Safe control design in our increasingly technology-reliant society is important, yet difficult even under idealized conditions. In theory, it may be performed by a variety of techniques, including control barrier function (CBF)-based strategies \cite{wielandallgower2007cbf,cortez2019cbfmechsys}. As a set-theoretic method for enforcing that the states of a system remain safe, i.e. within a specified set, CBFs have proven to be effective both in correcting some potentially unsafe control action \cite{rauscher2016constrained,Cheng2019EndToEndSafeRL} and as a constraint in optimization-based control \cite{black2020quadratic,garg2020fixedtime}. In fact, control design using quadratic programs (QP) as a means to synthesize control Lyapunov function (CLF)-based performance specifications, which can guarantee convergence of the closed-loop trajectories to some goal set, and CBF-based safety has become a staple in the literature \cite{ames2014control,ames2017control}. 

While CLFs encode a goal-reaching performance specification, the rate of convergence depends on the form of the condition imposed on its time-derivative. For example, while an exponentially stable CLF \cite{ames2014rapidly} guarantees that system trajectories reach a goal set as time tends toward infinity, the principle of fixed-time stability (FxTS) is used to define a fixed-time stable CLF \cite{garg2020fixedtime} which drives the closed-loop trajectories to the goal within a fixed-time independent of initial conditions. 

Robust approaches to safety \cite{xu2015robustness}, which often shrink the safe set in order to preserve set-invariance in the presence of worst-case disturbances, tend to be overly conservative. Typically, conservatism degrades the ability of the controller to meet performance objectives. In contrast, adaptive safety techniques \cite{Taylor2019aCBF} are prone to chattering control solutions, especially as the state approaches the boundary of the safe set. While a highly oscillatory control signal may suffice in simulation, it can destabilize dynamical systems in practice. In an approach which combines these two regimes, however, the authors of \cite{Lopez2020racbf} demonstrate that robust, adaptive control barrier functions (RaCBF) can mitigate these shortcomings. In addition, they utilize set-membership identification (SMID) to reduce the uncertainty surrounding a set of unknown parameters in the system dynamics, which results in improved performance. In a learning-based approach, \cite{taylor2020Learning} seeks to improve the performance of a Segway controller by learning unstructured uncertainty in the dynamics of the control barrier function itself. And while many of these techniques have been demonstrably effective in improving controller performance, the issue of navigating the domain near the boundary of the safe region remains problematic without formal guarantees of deciphering model uncertainty.

In considering some structured, parametric uncertainty for a class of nonlinear, control-affine systems, we introduce a novel parameter adaptation law which provides such a guarantee to drive the parameter estimates to their true values within fixed-time, independent of the initial estimates. Using Lyapunov analysis, we leverage this result to define an upper bound on the parameter estimation error as an explicit function of time. We then introduce a new condition on the time derivative of an RaCBF that guarantees forward-invariance of a \textit{shrunken} safe set, which approaches the \textit{nominal} safe set within the fixed-time horizon. In a case study on the ability of our proposed controller to tolerate marginally safe regions of the state space versus a selection from the literature, we compare the works of \cite{Taylor2019aCBF,Lopez2020racbf,black2020quadratic} and \cite{Zhao2020robustQP} on a simple problem with static obstacles. Finally, we use an automobile overtaking scenario to highlight another advantage of the proposed method: with the uncertain parameters in the system dynamics guaranteed to be known within fixed-time, our controller can accomplish a maneuver even under high levels of uncertainty.

The paper is organized as follows. Section II reviews set invariance via CBFs, finite-time stability (FTS), FxTS, and an FTS parameter estimation scheme from the literature. In Section III we formalize the problem at hand. Section IV contains our novel FxTS adaptation law and its implications on safe control under uncertainty. Section V highlights a simple case study used to compare other recent work to our proposed method, and its application on a highway overtake example. We conclude with a summary of contributions and directions for future work in Section VI.


\section{Mathematical Preliminaries}\label{sec: math prelim}

In the rest of the paper, $\mathbb R$ denotes the set of real numbers. The ones vector of size $n \times m$ is denoted $\mathbf{1}_{n \times m}$. We use $\|\cdot\|$ to denote the Euclidean norm and $\|\cdot\|_{\infty}$ to denote the $\mathscr{L}_{\infty}$ norm. As convention, we denote the minimum and maximum eigenvalue of a matrix $M$ as $\lambda_{min}(M)$ and $\lambda_{max}(M)$ respectively. We write $\partial S$ for the boundary of the closed set $S$, and $\textrm{int}(S)$ for its interior. The Lie derivative of a function $V:\mathbb R^n\rightarrow \mathbb R$ along a vector field $f:\mathbb R^n\rightarrow\mathbb R^n$ at a point $x\in \mathbb R^n$ is denoted as $L_fV(x) \triangleq \frac{\partial V}{\partial x} f(x)$. 

We now review the preliminaries, including a finite-time parameter adaptation law from the literature which inspired our fixed-time adaptation law. 


\subsection{Set Invariance}

In this paper, we consider the following class of nonlinear, control-affine systems subject to parametric uncertainty:
{\small
\begin{equation}\label{uncertain system}
    \begin{aligned}
        \dot x(t) &= f(x(t)) + g(x(t))u(t) + \Delta(x(t)) \theta, \;\;
        x(t_0) = x_0,
    \end{aligned}
\end{equation}
}\normalsize
\noindent where $x(t) \in \mathbb{R}^n$ denotes the state, $u(t) \in U \subset \mathbb{R}^m$ the control input, and $\theta \in \Theta \subset \mathbb{R}^p$ some constant, bounded, unknown parameters. We assume that $f: \mathbb{R}^n \rightarrow \mathbb{R}^n$ and $g: \mathbb{R}^n \rightarrow \mathbb{R}^{n \times m}$ are locally Lipschitz, that $\Delta : \mathbb{R}^n \rightarrow \mathbb{R}^{n \times p}$ is a known regressor matrix, and that $f$, $g$, and $\Delta$ are bounded for bounded inputs. The regressor $\Delta$ may capture, for example, the effects of sensing or modelling errors in the system dynamics, whereas the $\theta$ vector parameterizes such errors. We also define $\hat{\theta}$ as the estimated parameter vector, and $\Tilde{\theta} = \theta - \hat{\theta}$ as the parameter estimation error vector. As such, the parameter error dynamics are described by
\begin{equation}\label{error dynamics}
    \dot{\Tilde{\theta}} = \dot{\theta} - \dot{\hat{\theta}} = -\dot{\hat{\theta}}, \;\; \Tilde{\theta}(0) = \Tilde{\theta}_0.
\end{equation}

Let us also define a set of safe states as
\begin{subequations}\label{set}
    \begin{align}
        S &= \left \{ x \in \mathbb{R}^n : h(x) \geq 0 \right \}\\
        \partial S &= \left \{ x \in \mathbb{R}^n : h(x) = 0 \right \}\\
        \textrm{int}(S) &= \left \{ x \in \mathbb{R}^n : h(x) > 0 \right \}
    \end{align}
\end{subequations}

\noindent where $h: \mathbb{R}^n \rightarrow \mathbb{R}$ is continuously differentiable. Then, the following Lemma, known as Nagumo's Theorem \cite{blanchini1999set},
provides a necessary and sufficient condition on the forward invariance of the set \eqref{set} under the system dynamics \eqref{uncertain system}.

\begin{Lemma}
    Let a unique closed-loop solution to \eqref{uncertain system} exist in forward time.
    We say that the set $S$ can be \textit{rendered} forward-invariant if and only if there exists a control $u\in U$ such that
    \begin{equation}\label{eq: controlled forward invariance}
        \sup_{u \in U} \{L_fh(x) + L_gh(x)u + L_{\Delta}h(x)\theta\} \geq 0, \forall x \in \partial S.
    \end{equation}
\end{Lemma}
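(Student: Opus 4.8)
The plan is to prove this as the standard Nagumo-type characterization of controlled forward invariance, adapted to the control-affine system with parametric term. The statement is essentially: the set $S = \{x : h(x) \geq 0\}$ can be rendered forward-invariant by some admissible control if and only if the supremum condition \eqref{eq: controlled forward invariance} holds on the boundary $\partial S$. I would first set up the two directions carefully, using the existence of a unique closed-loop solution (so that for any chosen $u$, trajectories are well-defined in forward time) and the fact that $\dot{h}(x(t)) = \frac{\partial h}{\partial x}\dot{x} = L_f h(x) + L_g h(x) u + L_\Delta h(x)\theta$ along solutions, which follows directly from the chain rule and the system dynamics \eqref{uncertain system}.

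For necessity, I would argue by contraposition: suppose \eqref{eq: controlled forward invariance} fails, i.e. there exists $x^\star \in \partial S$ such that $L_f h(x^\star) + L_g h(x^\star) u + L_\Delta h(x^\star)\theta < 0$ for every $u \in U$. Then starting from $x_0 = x^\star$ (so $h(x_0) = 0$), whatever admissible control is applied, $\dot{h}(x(t_0)) < 0$, and by continuity $\dot h$ stays negative for a short time, so $h(x(t)) < 0$ for $t$ slightly larger than $t_0$; hence the trajectory leaves $S$ and $S$ cannot be rendered forward-invariant. For sufficiency, the classical Nagumo argument applies: at any boundary point the condition \eqref{eq: controlled forward invariance} guarantees the existence of a control keeping $\dot h(x) \geq 0$, so $h$ cannot decrease through zero; combined with a subtangentiality / Bouligand tangent cone argument (or, in the smooth-$h$ case handled here, a direct comparison-lemma argument on $h(x(t))$), one concludes $h(x(t)) \geq 0$ for all $t \geq t_0$ whenever $h(x_0) \geq 0$. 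Here one uses that $h$ is continuously differentiable so that $\partial S = \{h = 0\}$ is exactly where the tangent-cone condition is nontrivial, and in the interior $h > 0$ provides a margin.

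The key steps, in order, are: (1) establish $\dot h(x(t)) = L_f h + L_g h\, u + L_\Delta h\,\theta$ along solutions via the chain rule; (2) prove necessity by contraposition using continuity of $\dot h$ to push the trajectory out of $S$ from a bad boundary point; (3) prove sufficiency by showing the condition prevents $h$ from crossing zero downward, invoking Nagumo's theorem / a subtangentiality argument, citing \cite{blanchini1999set}; (4) note that uniqueness and forward existence of solutions, assumed in the hypothesis, is what makes "rendered forward-invariant" well-posed and lets the pointwise-in-time argument run. I expect the main obstacle to be the sufficiency direction's technical rigor: the pointwise boundary condition \eqref{eq: controlled forward invariance} gives, for each boundary point, \emph{some} admissible $u$, but assembling these into a control signal that produces a well-defined (Carathéodory/absolutely continuous) closed-loop solution staying in $S$ requires either a measurable-selection argument or an appeal to the assumed existence of closed-loop solutions together with a careful invariance argument — this is precisely where \cite{blanchini1999set} does the heavy lifting, so I would lean on that citation rather than reproving Nagumo's theorem from scratch, and restrict the detailed argument to how the parametric term $L_\Delta h(x)\theta$ fits into the standard template (it does so transparently, since $\theta$ is a fixed unknown constant and thus just shifts the effective drift).
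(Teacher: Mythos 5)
Your proposal is correct and matches the paper's treatment: the paper does not prove this Lemma itself, but states it as Nagumo's Theorem and defers entirely to \cite{blanchini1999set}, which is exactly where you place the heavy lifting, with the parametric term $L_\Delta h(x)\theta$ absorbed as a constant shift of the drift just as you describe. Your sketch of necessity by contraposition and sufficiency via the subtangentiality argument is the standard reasoning behind that citation, so no further comparison is needed.
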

\begin{Remark}
    The closed-loop system \eqref{uncertain system} admits a unique solution if $u(t) \in U$ is Lipschitz.
\end{Remark}

The authors of \cite{Lopez2020racbf} ensure $S$ is safe with respect to the uncertain dynamics \eqref{uncertain system} by enforcing that a \textit{shrunken} set, $S_r$, is safe.
Before formally defining $S_r$ we make the following assumption.

\begin{Assumption}\label{ass bounded parameters}
    The set $\Theta$ to which the unknown parameters $\theta$ belong is known, compact, and convex.
\end{Assumption}

\begin{Remark}
    Assumption \ref{ass bounded parameters} imples that we can also restrict
    $\hat{\theta} \in \Theta$. Thus, we can define an upper bound $\vartheta \coloneqq \sup_{\theta_1,\theta_2 \in \Theta}(\|\theta_1 - \theta_2\|_{\infty})$ on the norm $\|\Tilde \theta\|_\infty$ of the parameter estimation error $\Tilde \theta$, so that $\|\Tilde{\theta}\|_{\infty} \leq \vartheta.$
\end{Remark}


We let $\boldsymbol{\vartheta} = \vartheta \cdot \mathbf{1}_{p\times1}$, and are now ready to present the definition for the \textit{shrunken} set, $S_r$, as defined in \cite{Lopez2020racbf}.
\begin{subequations}\label{set robust}
    \begin{align}
        S_r &= \left \{ x \in \mathbb{R}^n : h(x) \geq \frac{1}{2} \boldsymbol{\vartheta}^T \Gamma ^{-1} \boldsymbol{\vartheta} \right \}\\
        \partial S_r &= \left \{ x \in \mathbb{R}^n : h(x) = \frac{1}{2} \boldsymbol{\vartheta}^T \Gamma ^{-1} \boldsymbol{\vartheta} \right \}\\
        \textrm{int}(S_r) &= \left \{ x \in \mathbb{R}^n : h(x) > \frac{1}{2} \boldsymbol{\vartheta}^T \Gamma ^{-1} \boldsymbol{\vartheta} \right \}
    \end{align}
\end{subequations}

\noindent where $\Gamma$ is a constant, positive-definite matrix such that $h(x(0)) \geq \frac{1}{2}\boldsymbol{\vartheta}^T \Gamma^{-1} \boldsymbol{\vartheta}$.
A new CBF may be defined as
\begin{equation}\label{RaCBF}
    h_r(x,\boldsymbol{\vartheta}) = h(x) - \frac{1}{2} \boldsymbol{\vartheta}^T \Gamma ^{-1} \boldsymbol{\vartheta},
\end{equation}

\noindent and the sufficient condition that renders $S_r$ safe is
\begin{equation}\label{robust forward-invariance}
    \dot{h}_r = \frac{\partial h}{\partial x} \dot{x} - \boldsymbol{\vartheta}^T \Gamma ^{-1} \dot{\boldsymbol{\vartheta}} \geq -\alpha (h_r),
\end{equation}

\noindent where $\alpha: [0,a) \rightarrow [0,\infty)$ is some class $K$ function, i.e., strictly increasing with $\alpha(0) = 0$. 

\begin{Remark}
    If $S_r \subset S$ in \eqref{set robust} is safe, then $S$ in \eqref{set} is safe. Thus, satisfaction of \eqref{robust forward-invariance} implies that \eqref{eq: controlled forward invariance} holds.
\end{Remark}

\subsection{Finite- and Fixed-Time Stability}

We now address performance criteria: finite- and fixed-time stability of an equilibrium point of the dynamical system
\begin{equation}\label{nonlinear system}
    \begin{aligned}
        \dot x(t) &= f(x(t)), \quad x(t_0) = x_0,
    \end{aligned}
\end{equation}

\noindent for which it is assumed that a unique solution exists, where $x \in \mathbb{R}^n$, $f:\mathbb{R}^n \rightarrow \mathbb{R}^n$ is continuous, and $f(0) = 0$. 


\begin{Definition}[\hspace{-0.3pt}\cite{bhat2000finite}]\label{FTS Def}
    The origin of \eqref{nonlinear system} is globally finite-time stable (FTS) if the following conditions hold:
    \begin{itemize}
        \item The origin of \eqref{nonlinear system} is stable in the sense of Lyapunov
        \item Any solution $x(t,x_0)$ of \eqref{nonlinear system} reaches the origin in finite settling-time, $T(x_0)$, i.e. $x(t,x_0) = 0$, $\forall t \geq T(x_0)$.
    \end{itemize}
\end{Definition}

\begin{Definition}[\hspace{-0.3pt}\cite{polyakov2012nonlinear}]\label{FxTS Def}
    The origin of \eqref{nonlinear system} is fixed-time stable (FxTS) if it is globally FTS and any solution $x(t,x_0)$ of \eqref{nonlinear system} reaches the origin in finite settling-time, $T$, independent of $x_0$, i.e. $x(t,x_0) = 0$, $\forall t \geq T$.
\end{Definition}



We refer the interested reader to \cite{bhat2000finite} and \cite{polyakov2012nonlinear} respectively for Lyapunov conditions which guarantee FTS and FxTS of the origin of \eqref{nonlinear system}.



\subsection{Finite-Time Parameter Estimation}

The parameter estimation scheme in the discussion to follow is predicated on Assumption \ref{ass state and control access}.

\begin{Assumption}\label{ass state and control access}
    The state, $x$, and control input $u$, of \eqref{uncertain system} are bounded, and $x$ is accessible for measurement.
\end{Assumption}


\noindent Let us now review the notion of persistent excitation (PE).

\begin{Definition}[\hspace{-0.3pt}\cite{Bodson1989AdaptiveBook}]\label{Def PE}
    A vector or matrix function, $\phi$, is persistently excited (PE) if there exist $T > 0$, $\epsilon > 0$, such that $\int_{t}^{t+T}\phi (r) \phi ^T (r) dr \geq \epsilon I, \forall t \geq 0$.
\end{Definition}

\noindent Now we introduce the following assumption on $\Delta (x)$.

\begin{Assumption}\label{ass delta PE}
    The transpose of the regressor matrix in \eqref{uncertain system}, $\Delta^T (x)$,  is persistently excited.
\end{Assumption}

\begin{Remark}
    Positive-definiteness of $\Delta^T\Delta$ is sufficient for $\Delta^T$ to satisfy the PE condition.
\end{Remark}

\noindent We now review a FTS parameter estimation scheme which forms the basis for our FxTS adaptation law and was first proposed for nonlinear dynamical systems in \cite{Na2011nonlinear}, and extended for robotic applications in \cite{Na2015robotic}. First, we note that we may re-write \eqref{uncertain system} as:
\begin{equation}\label{Na uncertain system}
    \begin{aligned}
        \dot x(t) &= \varphi (x,u) + \Phi (x) \theta, \\
    \end{aligned}
\end{equation}


\noindent where $\varphi(x,u) = f(x) + g(x)u$ and $\Phi(x) = \Delta(x)$.
The authors of \cite{Na2011nonlinear} and \cite{Na2015robotic} introduce $x_f$, $\varphi _f$, and $\Phi _f$ to filter $x$, $\varphi$, and $\Phi$ as follows:
\begin{align}
    k\dot{x}_f + x_f &= x, \quad x_f(0) = 0, \quad \dot{x}_f(0) = 0 \label{xf1} \\
    k\dot{\varphi}_f + \varphi _f &= \varphi, \quad \varphi _f(0) = 0, \quad \dot{\varphi}_f(0) = 0 \label{phif1} \\
    k\dot{\Phi}_f + \Phi _f &= \Phi, \quad \Phi _f(0) = 0, \quad \dot{\Phi}_f(0) = 0 \label{phif2}
\end{align}



\noindent where $k>0$ is a design parameter. Mimicking the form of the system dynamics, the filtered system dynamics are:
\begin{align}\label{Filtered System}
    \dot x_f = \varphi _f + \Phi _f\theta,
\end{align}

\noindent which serves as the basis for estimating $\theta$. 
We now observe that $\dot{x}_f - \varphi_f = \Phi _f\theta$ and define an auxiliary and integrated regressor matrix $P$ and vector $Q$ such that:
\begin{align}
    \dot{P} &= -\ell_e P + \Phi_f ^T \Phi_f, \quad &P(0) &= 0 \label{filter P} \\
    \dot{Q} &= -\ell_e Q + \Phi_f ^T (\dot{x}_f - \varphi_f), \quad &Q(0) &= 0 \label{filter Q} 
\end{align}

\noindent where $\ell_e>0$ is another design parameter. The solutions are:
\begin{align}
    P(t) &=\int_{0}^{t} e^{-\ell_e(t-r)}\Phi_f ^T(r) \Phi_f (r) dr \label{sol P} \\
    Q(t) &=\int_{0}^{t} e^{-\ell_e(t-r)}\Phi_f ^T(r) (\dot{x}_f(r)-\varphi_f (r)) dr \label{sol Q}
\end{align}

\noindent and from them it may be discerned that $Q = P\theta$. Now, define an additional auxiliary vector as
 \begin{equation}\label{filter W}
    W = P\hat{\theta} - Q = -P\Tilde{\theta}.
\end{equation}
 
\noindent Then, the authors of \cite{Na2015robotic} introduce their adaptation law as
\begin{equation}\label{FT estimation law}
    \dot{\hat{\theta}} = -\Lambda \frac{P^TW}{\|W\|},
\end{equation}
 
\noindent where $\Lambda$ is a constant, positive definite, gain matrix. Finite-time (FT) convergence of the estimated parameters to their true values is guaranteed by the following result.

\begin{Theorem}[\hspace{-0.3pt}\cite{Na2015robotic}]\label{Thm FT Parameter Estimation}
    For system \eqref{Na uncertain system} with parameter adaptation law \eqref{FT estimation law} and $\lambda_{min} (P) > \sigma > 0$, the parameter estimation error $\Tilde{\theta}$ converges to zero in finite-time $t_a$, satisfying $t_a \leq \|\Tilde{\theta}(0)\|\frac{\lambda _{max}(\Gamma ^{-1})}{\sigma}$.
\end{Theorem}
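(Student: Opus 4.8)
The plan is to run a standard Lyapunov argument with the candidate $V(\Tilde{\theta}) = \tfrac12 \Tilde{\theta}^T \Gamma^{-1} \Tilde{\theta}$, which is positive definite and radially unbounded since $\Gamma \succ 0$. The structural fact that makes everything work — already assembled in the excerpt — is that $Q = P\theta$, so the \emph{measurable} auxiliary vector $W = P\hat{\theta} - Q = -P\Tilde{\theta}$ is an exact surrogate for the \emph{unmeasurable} estimation error, premultiplied by the regressor information matrix $P$ from \eqref{sol P}.

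First I would differentiate $V$ along the error dynamics \eqref{error dynamics}. Since $\dot{\Tilde{\theta}} = -\dot{\hat{\theta}}$, we get $\dot V = -\Tilde{\theta}^T \Gamma^{-1} \dot{\hat{\theta}}$. Substituting the adaptation law \eqref{FT estimation law} together with $W = -P\Tilde{\theta}$ (and taking the gain compatible with the metric, i.e. $\Lambda = \Gamma$, so that the cross term is symmetric), the quadratic form collapses: $\dot V = \Tilde{\theta}^T \Gamma^{-1}\Gamma P^T W / \|W\| = (P\Tilde{\theta})^T W / \|W\| = -W^T W/\|W\| = -\|W\|$. This right-hand side is well-defined away from $\Tilde{\theta}=0$ because $\lambda_{min}(P) > \sigma > 0$ forces $P \succ 0$, hence $W=0 \iff \Tilde{\theta}=0$; at the equilibrium the update is set to zero in the usual way.

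Next I would lower-bound $\|W\|$ using that $P$ in \eqref{sol P} is symmetric positive semidefinite and, by hypothesis, $\lambda_{min}(P) > \sigma$: thus $\|W\| = \|P\Tilde{\theta}\| \geq \lambda_{min}(P)\,\|\Tilde{\theta}\| > \sigma \|\Tilde{\theta}\|$. Combining this with $V \leq \tfrac12 \lambda_{max}(\Gamma^{-1}) \|\Tilde{\theta}\|^2$, i.e. $\|\Tilde{\theta}\| \geq \sqrt{2V/\lambda_{max}(\Gamma^{-1})}$, yields the finite-time differential inequality $\dot V \leq -\sigma \sqrt{2/\lambda_{max}(\Gamma^{-1})}\,\sqrt{V}$. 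Integrating this comparison inequality from the initial time until $V$ reaches zero gives a settling time bounded by $2\sqrt{V(0)} \big/ \big( \sigma \sqrt{2/\lambda_{max}(\Gamma^{-1})} \big) = \sqrt{2V(0)\,\lambda_{max}(\Gamma^{-1})}/\sigma$, and finally $\sqrt{2V(0)} \leq \sqrt{\lambda_{max}(\Gamma^{-1})}\,\|\Tilde{\theta}(0)\|$ delivers $t_a \leq \|\Tilde{\theta}(0)\|\,\lambda_{max}(\Gamma^{-1})/\sigma$, as claimed.

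The main point demanding care — rather than a deep obstacle — is the cross term $\Tilde{\theta}^T \Gamma^{-1} \Lambda P^T W$: its clean collapse to $-\|W\|$ needs the adaptation gain to be compatible with the metric defining $V$, and one must also justify that the discontinuous right-hand side of \eqref{FT estimation law} admits well-posed (Carathéodory/Filippov) solutions up to the convergence instant, which hinges precisely on $P$ remaining uniformly positive definite — exactly the role played by the persistent-excitation Assumption \ref{ass delta PE} (in practice $\lambda_{min}(P) > \sigma$ holds only after a finite transient, during which the estimate is simply propagated).
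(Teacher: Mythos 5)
The paper itself contains no proof of this theorem: it is imported as a preliminary directly from the cited reference \cite{Na2015robotic}, so there is no in-paper argument to compare against. Your reconstruction is correct and is exactly the standard argument behind that result --- the Lyapunov function $V=\tfrac12\Tilde{\theta}^T\Gamma^{-1}\Tilde{\theta}$ with $\Lambda=\Gamma$ (the convention the paper adopts right after the statement), the collapse $\dot V=-\|W\|$ via $W=-P\Tilde{\theta}$, the lower bound $\|W\|\geq\sigma\|\Tilde{\theta}\|\geq\sigma\sqrt{2V/\lambda_{max}(\Gamma^{-1})}$ from $\lambda_{min}(P)>\sigma$ and symmetry of $P$, and the comparison-lemma integration yielding $t_a\leq\|\Tilde{\theta}(0)\|\,\lambda_{max}(\Gamma^{-1})/\sigma$ --- including the appropriate caveats on the discontinuous right-hand side and on $\lambda_{min}(P)>\sigma$ holding only after the excitation has accumulated.
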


While in general it is not required that $\Lambda$ from \eqref{FT estimation law} and $\Gamma$ from \eqref{set robust} be equivalent, we consider this to be true for the rest of the paper. In Section \ref{sec: Main Results}, we advance the FT parameter estimation result by proposing a FxTS adaptation law for the class of systems described by \eqref{uncertain system}.
 






\section{Problem Formulation}

We now formalize the problem under consideration.


\begin{Problem}\label{Problem Statement}
    Consider a nonlinear, control-affine dynamical system subject to parametric uncertainty as in \eqref{uncertain system}. Given that Assumptions \ref{ass bounded parameters}-\ref{ass delta PE} hold, design an adaptation law, $\dot{\hat{\theta}}$, and controller, $u$, such that the following conditions are satisfied: 
    \begin{enumerate}
        \item The parameter estimation error converges to zero within fixed-time, $T_{\theta}$, i.e. $\Tilde{\theta}(t) \rightarrow 0$ as $t \rightarrow T_{\theta} < \infty$, independent of $\Tilde{\theta}(0)$.
        \item The system trajectories remain safe for all time, i.e. $x(t) \in S$,  $\forall t \geq t_0$.
        \item The system trajectories converge to a goal set within fixed-time, $T_g$, i.e. $x(t) \in S_g$, $\forall t \geq T_g$.
    \end{enumerate}
\end{Problem}

The following section contains our proposed method. 

\section{Main Results}\label{sec: Main Results}

Before introducing one of the main results of the paper, we address modifications to the filtering scheme \eqref{xf1}-\eqref{phif2}. In place of the first-order scheme of \cite{Na2011nonlinear},\cite{Na2015robotic}, we use the following second-order filters:
\begin{align}
    k_e^2\ddot{x}_f + 2k_e\dot{x}_f + x_f &= x \label{xf1 second} \\
    k_e^2\ddot{\varphi}_f + 2k_e\dot{\varphi}_f + \varphi _f &= \varphi \label{phif1 second} \\
    k_e^2\ddot{\Phi}_f + 2k_e\dot{\Phi}_f +  \Phi _f &= \Phi \label{phif2 second}
\end{align}

\noindent where all initial conditions are zero, i.e. $\beta_f(0) = 0$, $\dot{\beta}_f(0) = 0$, $\ddot{\beta}_f(0) = 0$, $\forall \beta \in \{x,\varphi,\Phi\}$. This second-order system is stable, strictly proper, and minimum-phase similarly to \eqref{xf1}-\eqref{phif2}, and in addition, it is critically damped with a natural frequency of $\omega_n = 1/k_e$. This is desirable, as critically damped systems exhibit the smallest settling time, $t_s$, without oscillations \cite{roskilly2015marine}. 

\subsection{FxTS Adaptation Law}

We now introduce one of the main results of the paper, an adaptation law which renders the trajectories $\tilde \theta(t)$ of the parameter estimation error fixed-time stable to zero, and thus guarantees convergence of the parameters to their true values within fixed-time.

\begin{Theorem}\label{Thm: FxT Parameter Adaptation}
    Consider a nonlinear, control-affine system with parametric uncertainty as in \eqref{uncertain system}. If \eqref{xf1 second}-\eqref{phif2 second} filter $x$, $\varphi$, and $\Phi$, and the auxiliary matrix P and vectors Q, W are defined by \eqref{filter P}-\eqref{filter W}, then, under the ensuing adaptation law
    \begin{equation}\label{FxT Estimation Law}
        \dot{\hat{\theta}} = \Gamma W \left(W^T P^{-T} W \right)^{-1} \left(-c_{1e} \nu ^{\gamma _{1e}} - c_{2e} \nu ^{\gamma _{2e}} \right),
    \end{equation}
    the estimated parameters, $\hat{\theta}(t)$, converge to the true parameters, $\theta$, in fixed-time, $T_{\theta}$, i.e.,  $\Tilde{\theta}(t) \rightarrow 0$ and $\hat{\theta}(t) \rightarrow \theta$ as $t \rightarrow T_{\theta}$, where
    \begin{equation}\label{parameter convergence time}
        T_{\theta} \leq T_b = \frac{1}{c_{1e} (1 - \gamma _{1e})} + \frac{1}{c_{2e} (\gamma _{2e} - 1)}.
    \end{equation}
    with $\nu = \frac{1}{2}W^T P^{-T} \Gamma ^{-1} P^{-1} W$, $c_{1e} > 0$, $c_{2e} > 0$, $0 < \gamma _{1e} < 1$, $\gamma _{2e} > 1$, and $\Gamma \in \R^{p\times p}$ being a constant, positive-definite, gain matrix.
\end{Theorem}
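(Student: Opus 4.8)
The plan is to exhibit a Lyapunov function for the parameter error dynamics and show it satisfies the classical FxTS differential inequality of Polyakov. The natural candidate is $V(\tilde\theta) = \tfrac12 \tilde\theta^T \Gamma^{-1}\tilde\theta$. Since $W = -P\tilde\theta$ and $P$ is invertible (guaranteed once $\lambda_{\min}(P)>\sigma>0$, which the second-order filtering and PE assumption should deliver after a transient), we have $\tilde\theta = -P^{-1}W$, so $V = \tfrac12 W^T P^{-T}\Gamma^{-1}P^{-1}W = \nu$. This identity is the key bookkeeping step: it lets us rewrite everything in terms of the measurable quantity $\nu$.

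First I would differentiate: using \eqref{error dynamics}, $\dot{\tilde\theta} = -\dot{\hat\theta}$, so $\dot V = \tilde\theta^T\Gamma^{-1}\dot{\tilde\theta} = -\tilde\theta^T\Gamma^{-1}\dot{\hat\theta}$. Substituting $\tilde\theta = -P^{-1}W$ and the proposed law \eqref{FxT Estimation Law},
\begin{align*}
\dot V &= (P^{-1}W)^T\Gamma^{-1}\,\Gamma W\,(W^TP^{-T}W)^{-1}\!\left(-c_{1e}\nu^{\gamma_{1e}}-c_{2e}\nu^{\gamma_{2e}}\right)\\
&= W^TP^{-T}W\,(W^TP^{-T}W)^{-1}\!\left(-c_{1e}\nu^{\gamma_{1e}}-c_{2e}\nu^{\gamma_{2e}}\right)\\
&= -c_{1e}\nu^{\gamma_{1e}}-c_{2e}\nu^{\gamma_{2e}} = -c_{1e}V^{\gamma_{1e}}-c_{2e}V^{\gamma_{2e}}.
\end{align*}
So the cancellation built into the adaptation law collapses $\dot V$ exactly to the canonical FxTS form $\dot V \le -c_{1e}V^{\gamma_{1e}} - c_{2e}V^{\gamma_{2e}}$ with $0<\gamma_{1e}<1$ and $\gamma_{2e}>1$. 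Invoking the standard FxTS Lyapunov theorem of \cite{polyakov2012nonlinear} (referenced in the preliminaries), $V(t)\to 0$ — hence $\tilde\theta(t)\to 0$ and $\hat\theta(t)\to\theta$ — in a settling time bounded by $T_b = \frac{1}{c_{1e}(1-\gamma_{1e})} + \frac{1}{c_{2e}(\gamma_{2e}-1)}$, independent of $\tilde\theta(0)$, which is exactly \eqref{parameter convergence time}. After $T_\theta$ one should note $\dot{\hat\theta}=0$ since $W=0$, so the estimate stays at $\theta$.

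The main obstacle is not the Lyapunov computation, which is essentially algebraic, but ensuring the law \eqref{FxT Estimation Law} is well-posed: the scalar $W^TP^{-T}W = -W^TP^{-1}P\tilde\theta/\|\cdot\| \ldots$ — more precisely $W^TP^{-T}W = \tilde\theta^T P^T P^{-T}\tilde\theta$, wait — rather, with $W=-P\tilde\theta$ one gets $W^TP^{-T}W = \tilde\theta^T P^T P^{-T} P\tilde\theta = \ldots$ which must be shown positive whenever $\tilde\theta\neq 0$, so that the inverse exists; this requires $P$ nonsingular, i.e. the lower bound $\lambda_{\min}(P)>\sigma$ from the PE condition (Assumption \ref{ass delta PE}) and Theorem \ref{Thm FT Parameter Estimation}'s hypothesis. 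I would also remark that $\nu = V$ vanishing is equivalent to $W=0$ and to $\tilde\theta=0$ precisely because $P$ is invertible, closing the logical loop between convergence of $\nu$ and convergence of the estimate. Finally, the $V^{\gamma_{2e}}$ term with $\gamma_{2e}>1$ is what makes the bound independent of the initial condition (it dominates for large $V$), while the $V^{\gamma_{1e}}$ term with $\gamma_{1e}<1$ secures finite-time arrival near the origin — I would point to both contributions when citing the settling-time bound.
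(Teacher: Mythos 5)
Your proposal is correct and follows essentially the same route as the paper: the Lyapunov candidate $V_{\Tilde{\theta}} = \tfrac{1}{2}\Tilde{\theta}^T\Gamma^{-1}\Tilde{\theta}$, the substitution $W=-P\Tilde{\theta}$ (so that $\nu=V_{\Tilde{\theta}}$ and the $(W^TP^{-T}W)^{-1}$ factor cancels), and the invocation of the FxTS Lyapunov condition of \cite{polyakov2012nonlinear} to obtain the settling-time bound \eqref{parameter convergence time}. Your added remarks on well-posedness (invertibility of $P$ via $\lambda_{min}(P)>\sigma>0$ under the PE assumption, and $\nu=0 \Leftrightarrow \Tilde{\theta}=0$) address a point the paper leaves implicit, but do not change the argument.
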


\begin{proof}
Consider the Lyapunov function candidate $V_{\Tilde{\theta}} = \frac{1}{2} \Tilde{\theta}^T \Gamma ^{-1} \Tilde{\theta}$ for the system of the parameter-error dynamics \eqref{error dynamics}. Since $\dot{\theta} = 0$, its time derivative along the trajectories of \eqref{error dynamics} reads $\dot{V}_{\Tilde{\theta}} = -\Tilde{\theta}^T \Gamma ^{-1} \dot{\hat{\theta}}$.
Applying the adaptation law \eqref{FxT Estimation Law} yields $\dot{V}_{\Tilde{\theta}} = -\Tilde{\theta} ^T W \left( W^T P^{-T} W\right)^{-1} \left(-c_{1e} \nu ^{\gamma _{1e}} - c_{2e} \nu ^{\gamma _{2e}} \right)$. Then, by substituting \eqref{filter W}, we obtain
\begin{align}
    \dot{V}_{\Tilde{\theta}} = -c_{1e} V_{\Tilde{\theta}} ^{\gamma _{1e}} - c_{2e} V_{\Tilde{\theta}} ^{\gamma _{2e}},
\end{align}
i.e., the fixed-time stability condition from \cite[Lemma 1]{polyakov2012nonlinear}. Hence, the origin of \eqref{error dynamics} is fixed-time stable, and the trajectories $\Tilde \theta(t)$ reach the origin within a settling time $T_\theta$, given by \eqref{parameter convergence time}. Consequently, the estimated parameter vector, $\hat{\theta}(t)$, converges to the true parameter vector, $\theta$, within a fixed time, $T_{\theta}$, i.e., $\hat \theta(T_\theta)=\theta$.
\end{proof}

Whereas previous studies (e.g. \cite{Chen2019FxTadaptationrobotic}) use
\eqref{filter P} and \eqref{filter Q} in the design of a FxT adaptation scheme that converges to some bounded set, \eqref{FxT Estimation Law} guarantees fixed-time convergence of the estimated parameters to their true values. With this knowledge, we derive an expression for the upper bound on the infinity norm of the parameter error as a function of time.

\begin{Corollary}\label{Cor: upper bound parameter error}
Let $\Tilde{\theta} = \theta - \hat{\theta}$ be the parameter estimation error vector associated with a system of the form \eqref{uncertain system}.
If all of the following conditions hold

    \vspace{5pt}\begin{enumerate}
        \item The estimated parameter update law, $\dot{\hat{\theta}}$, is given by \eqref{FxT Estimation Law}
        \item $u(x(t))$ is any locally Lipschitz controller
        \item $\Gamma$ is constant, positive-definite, and \textbf{diagonal}
        \item $\gamma_{1e} = 1 - \frac{1}{\mu_e}$ and $\gamma_{2e} = 1 + \frac{1}{\mu_e}$ for some $\mu_e > 1$
    \end{enumerate}\vspace{5pt}
then the following expression constitutes an upper bound on $\|\Tilde{\theta}(t)\|_{\infty}, \; \forall t \in [0,T_{\theta}]$:
\begin{align}\label{err bound}
    \|\Tilde{\theta}(t)\|_{\infty} \leq \sqrt{M\left ( \frac{1}{N} \tan{ \left [ \Xi - N\frac{c_{1e}}{\mu_e}t \right ] } \right )^{\mu_e}} \coloneqq \eta (t), 
\end{align}
where $M =  2 \lambda_{max}(\Gamma)$, $N = \sqrt{\frac{c_{2e}}{c_{1e}}}$, and 
\begin{equation}
    \Xi = \tan^{-1}{ \left (\frac{1}{2} N \boldsymbol{\eta}^T(0) \Gamma^{-1} \boldsymbol{\eta}(0)\right )}
\end{equation}
with $\boldsymbol{\eta}(t) = \eta(t) \cdot \mathbf{1}_{p\times1}$ and 
\begin{equation}\label{tighter t bound}
    T_{\theta} \leq \frac{\mu_e\Xi}{\sqrt{c_{1e}c_{2e}}} \leq T_b
\end{equation}

\end{Corollary}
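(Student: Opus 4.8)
The plan is to leverage the fact that, from the proof of Theorem~\ref{Thm: FxT Parameter Adaptation}, the Lyapunov function $V_{\tilde\theta}=\tfrac12\tilde\theta^T\Gamma^{-1}\tilde\theta$ satisfies the \emph{scalar equality} $\dot V_{\tilde\theta}=-c_{1e}V_{\tilde\theta}^{\gamma_{1e}}-c_{2e}V_{\tilde\theta}^{\gamma_{2e}}$ under \eqref{FxT Estimation Law}, so that $V_{\tilde\theta}(t)$ can be written in closed form and then converted into a bound on $\|\tilde\theta(t)\|_\infty$. First I would impose condition (iv), $\gamma_{1e}=1-1/\mu_e$ and $\gamma_{2e}=1+1/\mu_e$, and introduce the change of variable $z(t)=V_{\tilde\theta}(t)^{1/\mu_e}$. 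Since $\dot z=\tfrac{1}{\mu_e}V_{\tilde\theta}^{1/\mu_e-1}\dot V_{\tilde\theta}$, the exponents collapse and the dynamics reduce, on $[0,T_\theta)$, to the Riccati-type ODE $\dot z=-\tfrac{c_{1e}}{\mu_e}\bigl(1+N^2z^2\bigr)$ with $N=\sqrt{c_{2e}/c_{1e}}$. Separation of variables and the arctangent integral then give $\arctan(Nz(t))=\arctan(Nz(0))-N\tfrac{c_{1e}}{\mu_e}t$, hence
\[
 V_{\tilde\theta}(t)=\Bigl(\tfrac1N\tan\bigl[\arctan(Nz(0))-N\tfrac{c_{1e}}{\mu_e}t\bigr]\Bigr)^{\mu_e}.
\]
This is valid as long as the bracketed phase stays in $[0,\pi/2)$, which I would check holds on $[0,T_\theta]$ because the phase decreases monotonically from $\arctan(Nz(0))<\pi/2$ down to $0$ (the endpoint handled by continuity).

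Next I would use that $\Gamma$ is positive-definite and \textbf{diagonal} to pass between $V_{\tilde\theta}$ and $\|\tilde\theta\|_\infty$ at both time endpoints. For the bound itself, $\lambda_{min}(\Gamma^{-1})=\lambda_{max}(\Gamma)^{-1}$ gives $\tilde\theta^T\Gamma^{-1}\tilde\theta\ge\lambda_{max}(\Gamma)^{-1}\|\tilde\theta\|_2^2\ge\lambda_{max}(\Gamma)^{-1}\|\tilde\theta\|_\infty^2$, so $\|\tilde\theta(t)\|_\infty\le\sqrt{M\,V_{\tilde\theta}(t)}$ with $M=2\lambda_{max}(\Gamma)$; substituting the closed form above yields precisely $\eta(t)$ in \eqref{err bound}. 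For the initial value, diagonality is essential: if $\|\tilde\theta(0)\|_\infty\le\eta(0)$ then, componentwise, $V_{\tilde\theta}(0)=\tfrac12\sum_i\tilde\theta_i(0)^2/\Gamma_{ii}\le\tfrac12\sum_i\eta(0)^2/\Gamma_{ii}=\tfrac12\boldsymbol\eta^T(0)\Gamma^{-1}\boldsymbol\eta(0)$. Since the closed-form expression is monotone increasing in the initial value, replacing $V_{\tilde\theta}(0)$ by this upper bound — i.e. identifying the initial phase $\arctan(Nz(0))=\arctan(N V_{\tilde\theta}(0)^{1/\mu_e})$ with $\Xi$ through monotonicity of $\arctan$ — keeps $\eta(t)$ a valid majorant of $\|\tilde\theta(t)\|_\infty$ for all $t\in[0,T_\theta]$. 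Here one also has to note that $\eta(0)$ and $\Xi$ are coupled by a consistency relation; this is harmless, since one simply takes $\eta(0)$ to be the finite, explicitly solvable value satisfying both equations (or any larger value dominating $\vartheta$).

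Finally, for the tightened settling-time estimate \eqref{tighter t bound}, I would read off from the closed form that $V_{\tilde\theta}(t)\to0$ exactly when the phase vanishes, i.e. at $t^\star=\mu_e\arctan(Nz(0))/(N c_{1e})=\mu_e\arctan(Nz(0))/\sqrt{c_{1e}c_{2e}}$, using $Nc_{1e}=\sqrt{c_{1e}c_{2e}}$; together with $\arctan(Nz(0))\le\Xi$ this gives $T_\theta\le\mu_e\Xi/\sqrt{c_{1e}c_{2e}}$. To see that $\mu_e\Xi/\sqrt{c_{1e}c_{2e}}\le T_b$, I would substitute $1-\gamma_{1e}=\gamma_{2e}-1=1/\mu_e$ to rewrite $T_b=\mu_e(c_{1e}^{-1}+c_{2e}^{-1})$, so the claim reduces to $\Xi\le\sqrt{c_{2e}/c_{1e}}+\sqrt{c_{1e}/c_{2e}}=N+N^{-1}$; since $\Xi$ is an arctangent of a nonnegative quantity, $\Xi<\pi/2<2\le N+N^{-1}$ by AM--GM, which closes the argument. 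I expect the only genuine difficulty to be bookkeeping rather than conceptual: threading the exponent $\mu_e$ correctly through the $z$-substitution and the diagonal-$\Gamma$ estimates, and confirming the arctangent argument never exits $[0,\pi/2)$ so that the closed-form solution — and hence $\eta$ — remains finite and valid throughout $[0,T_\theta]$.
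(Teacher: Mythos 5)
Your proposal is correct and follows essentially the same route as the paper's own proof: the scalar Lyapunov equality $\dot V_{\tilde\theta}=-c_{1e}V_{\tilde\theta}^{1-1/\mu_e}-c_{2e}V_{\tilde\theta}^{1+1/\mu_e}$ is integrated in closed form via an arctangent (your substitution $z=V_{\tilde\theta}^{1/\mu_e}$ is just the paper's change of variables in the separation-of-variables integral), the $\|\tilde\theta\|_\infty$ bound follows from the same diagonal-$\Gamma$ eigenvalue estimates, and the tighter settling time is read off from the vanishing of the arctangent phase. Your extra bookkeeping --- the AM--GM check that $\mu_e\Xi/\sqrt{c_{1e}c_{2e}}\le T_b$, the verification that the phase stays in $[0,\pi/2)$, and the remark on the implicit coupling between $\eta(0)$ and $\Xi$ --- is slightly more careful than what the paper writes out, but does not constitute a different approach.
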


\begin{proof}
    We consider the Lyapunov function candidate  $V_{\Tilde{\theta}} = \frac{1}{2}\Tilde{\theta} ^T \Gamma ^{-1} \Tilde{\theta}$, whose time derivative along the trajectories of \eqref{error dynamics}
    reads $\frac{dV_{\Tilde{\theta}}}{dt} = -c_{1e}V_{\Tilde{\theta}}^{1-1/\mu_e} - c_{2e}V_{\Tilde{\theta}}^{1+1/\mu_e}$.
    
    By separation of variables and integration, we now solve for $t$ as a function of $V(0)$ and $V(t)$. The change of variables $x = V_{\Tilde{\theta}}^{1/\mu_e}$ and $dx = \frac{1}{\mu_e}V_{\Tilde{\theta}}^{1 - 1/\mu_e}dV$ allows us to obtain:
    \small{
    \begin{align}
        &t = \int_{V(0)}^{V(t)} \frac{\mu_e x^{\mu_e -1} dx}{-c_{1e}x^{\mu_e-1}-c_{2e}x^{\mu_e+1}} =\\ \nonumber &-\frac{\mu_e}{\sqrt{c_{1e}c_{2e}}}\left [\tan^{-1}\left(NV_{\Tilde{\theta}}^{1/\mu_e}(t)\right) -  \tan^{-1}\left(NV_{\Tilde{\theta}}^{1/{\mu_e}}(0)\right) \right ]
    \end{align}} \normalsize
     where $N = \sqrt{\frac{c_{2e}}{c_{1e}}}$. This leads to 
    \small{
    \begin{align}\label{eq: error bound derivation}
        V_{\Tilde{\theta}}(t) 
        &= \left (\frac{1}{N} \tan\left [\tan^{-1}\left(N V(0)\right) -N\frac{c_{1e}}{\mu_e}t \right]\right)^{\mu_e}
    \end{align}}\normalsize
    where $V(0) = \frac{1}{2}\Tilde{\theta}(0)^T\Gamma^{-1}\Tilde{\theta}(0) \leq \frac{1}{2}\boldsymbol{\eta}(0)^T\Gamma^{-1}\boldsymbol{\eta}(0)$.
    Now, since $V_{\Tilde{\theta}} = \frac{1}{2}\Tilde{\theta} ^T \Gamma ^{-1} \Tilde{\theta}$, then with $\Gamma$ diagonal we can express $V_{\Tilde{\theta}} = \frac{1}{2}(\Gamma^{-1}_{11}\Tilde{\theta}_1^2 + ... + \Gamma^{-1}_{pp}\Tilde{\theta}_p^2)$, and observe that $V_{\Tilde{\theta}} \geq \frac{1}{2}\lambda^{-1}_{max}(\Gamma)\|\Tilde{\theta}\|^2  \geq \frac{1}{2}\lambda^{-1}_{max}(\Gamma)\|\Tilde{\theta}\|_{\infty}^2 $. Then, we substitute \eqref{eq: error bound derivation} in this inequality and rearrange terms to recover \eqref{err bound}.
    
    
    Then, for $0 \leq t \leq \frac{\mu_e}{c_{1e}N}\textrm{tan}^{-1}\left(\frac{1}{2}N\boldsymbol{\eta}^T\Gamma^{-1}\boldsymbol{\eta}\right)$ we have that \eqref{eq: error bound derivation} decreases monotonically 
    to zero. As such, we let $\Xi = \textrm{tan}^{-1} \left(\frac{1}{2}N\boldsymbol{\eta}^T\Gamma^{-1}\boldsymbol{\eta}\right)$ and obtain \eqref{tighter t bound}, which places a tighter upper bound on the settling time, $T_\theta$, than \eqref{parameter convergence time}.
\end{proof}


\begin{Remark}
    As a consequence of \eqref{err bound}, we may tighten the set of admissible parameters at time $t$ as $\Theta(t) = \{\theta \in \mathbb{R}^p: \; \|\theta - \hat{\theta}(t)\|_{\infty} \leq \eta (t)\}$,  $\forall t \in [0,T_{\theta}]$, and as $\Theta(t) = \{\hat{\theta}\}$, $\forall t \in (T_{\theta},\infty]$.
\end{Remark}


We now formalize the new RaCBF condition for forward-invariance of sets $S$ and $S_r$, defined in \eqref{set} and \eqref{set robust}.

\begin{Theorem}\label{Thm: Adaptive Safety}
    Let $\boldsymbol{\eta}(t) = \eta(t) \cdot \mathbf{1}_{p\times1}$ where $\eta(t)$ is given by \eqref{err bound}. Under the premises of Corollary \ref{Cor: upper bound parameter error}, the following condition is sufficient for forward-invariance of $S_r$, i.e. \eqref{set robust}:
    \small{
    \begin{align}\label{RaCBF Condition}
            &\sup_{u \in U}\{ L_fh(x) + L_gh(x)u + \Psi\} \nonumber \\
            &\geq -\alpha \left(h(x) - \frac{1}{2} \boldsymbol{\eta}(t)^T \Gamma^{-1} \boldsymbol{\eta}(t)) \right) + \mathrm{Tr}(\Gamma^{-1})\eta(t)\dot{\eta}(t)
    \end{align}}\normalsize
    where
    \small{
    \begin{align}
    \dot{\eta}(t) &= Z\tan^{\frac{\mu_e}{2}-1}\left(\Xi - N\frac{c_{1e}}{\mu_e}t\right)\sec^{2}\left(\Xi - N\frac{c_{1e}}{\mu_e}t\right)\label{eq: eta dot}\\
        \Psi &= \sum_{i=1}^p  \min\left\{C_iP_{\Theta}(\hat{\theta}_i - \eta),C_iP_{\Theta}(\hat{\theta}_i + \eta)\right\}\label{eq: safety projection}
    \end{align}}\normalsize
    for $Z = -c_{1e}\sqrt{MN^{2-\mu_e}}$, where $C_i$ denotes the $i^{th}$ column of $L_{\Delta}h(x)$ for $i \in \{1,\hdots,p\}$, and $P_{\Theta}$ the vector projection onto $\Theta$, as defined in \cite{Luenberger1969Optimization}.
    
\end{Theorem}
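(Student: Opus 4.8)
The plan is to establish that the condition \eqref{RaCBF Condition} implies the time-derivative bound \eqref{robust forward-invariance}, but with the constant vector $\boldsymbol{\vartheta}$ replaced by the time-varying bound $\boldsymbol{\eta}(t)$. In other words, I would introduce the time-varying barrier candidate $h_\eta(x,t) = h(x) - \tfrac12 \boldsymbol{\eta}(t)^T \Gamma^{-1} \boldsymbol{\eta}(t)$, whose zero-superlevel set at each instant is the set $S_r$ with $\boldsymbol{\vartheta}$ replaced by $\boldsymbol{\eta}(t)$, and show that $\dot h_\eta \geq -\alpha(h_\eta)$ along closed-loop trajectories whenever \eqref{RaCBF Condition} holds. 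Since by Corollary \ref{Cor: upper bound parameter error} we have $\|\tilde\theta(t)\|_\infty \leq \eta(t)$ for all $t$, and since $\eta(t)$ is monotonically decreasing to zero on $[0,T_\theta]$ (and $\eta \equiv 0$ thereafter), the set $\{x : h_\eta(x,t)\geq 0\}$ contains $S_r$ for all $t$ (because $\boldsymbol{\eta}(t)^T\Gamma^{-1}\boldsymbol{\eta}(t) \le \boldsymbol{\vartheta}^T\Gamma^{-1}\boldsymbol{\vartheta}$), so forward-invariance of the tightened time-varying set yields forward-invariance of $S_r$, hence safety of $S$.

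The key computation is to expand $\dot h_\eta$ along \eqref{uncertain system}. We get
\begin{align}
\dot h_\eta &= L_f h(x) + L_g h(x) u + L_\Delta h(x)\theta - \boldsymbol{\eta}(t)^T\Gamma^{-1}\dot{\boldsymbol{\eta}}(t) \nonumber \\
&= L_f h(x) + L_g h(x) u + L_\Delta h(x)\theta - \mathrm{Tr}(\Gamma^{-1})\eta(t)\dot\eta(t), \nonumber
\end{align}
using that $\Gamma$ is diagonal and $\boldsymbol{\eta} = \eta\cdot\mathbf 1_{p\times 1}$, so $\boldsymbol{\eta}^T\Gamma^{-1}\dot{\boldsymbol\eta} = \eta\dot\eta\sum_i \Gamma_{ii}^{-1} = \mathrm{Tr}(\Gamma^{-1})\eta\dot\eta$. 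The obstacle is the unknown term $L_\Delta h(x)\theta = \sum_{i=1}^p C_i \theta_i$: we cannot evaluate it, but we know $\theta_i \in [\hat\theta_i - \eta, \hat\theta_i + \eta] \cap \Theta$ from the Remark following Corollary \ref{Cor: upper bound parameter error}. The worst case over this interval of the scalar affine map $\theta_i \mapsto C_i\theta_i$ is attained at an endpoint; projecting the endpoints onto $\Theta$ via $P_\Theta$ (to respect the compact convex prior) and taking the minimum gives exactly $\Psi = \sum_i \min\{C_i P_\Theta(\hat\theta_i-\eta), C_i P_\Theta(\hat\theta_i+\eta)\}$, so that $L_\Delta h(x)\theta \geq \Psi$. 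Therefore $\dot h_\eta \geq L_f h + L_g h\, u + \Psi - \mathrm{Tr}(\Gamma^{-1})\eta\dot\eta$, and if $u$ is chosen to satisfy \eqref{RaCBF Condition}, the right side is at least $-\alpha(h_\eta)$, giving $\dot h_\eta \geq -\alpha(h_\eta)$.

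To finish, I would invoke the standard comparison-lemma / Nagumo argument (as in \cite{Lopez2020racbf} and the Remark after \eqref{robust forward-invariance}): since $h_\eta(x(t_0),t_0) = h(x_0) - \tfrac12\boldsymbol{\eta}(0)^T\Gamma^{-1}\boldsymbol{\eta}(0) \ge h(x_0) - \tfrac12\boldsymbol{\vartheta}^T\Gamma^{-1}\boldsymbol{\vartheta} \ge 0$ by the choice of $\Gamma$, and $\dot h_\eta \ge -\alpha(h_\eta)$ with $\alpha$ class-$\mathcal{K}$, the scalar comparison system keeps $h_\eta(x(t),t)\ge 0$ for all $t\ge t_0$; equivalently $h(x(t)) \ge \tfrac12\boldsymbol{\eta}(t)^T\Gamma^{-1}\boldsymbol{\eta}(t) \ge \tfrac12\boldsymbol{\vartheta}^T\Gamma^{-1}\boldsymbol{\vartheta}$ fails to drop below the (constant) threshold is not needed — rather $h(x(t)) \ge \tfrac12\boldsymbol{\eta}(t)^T\Gamma^{-1}\boldsymbol{\eta}(t)$ already places $x(t)$ in the set $\{h \ge \tfrac12\boldsymbol{\eta}(t)^T\Gamma^{-1}\boldsymbol{\eta}(t)\} \supseteq S_r$ only when we additionally check $\tfrac12\boldsymbol{\eta}(t)^T\Gamma^{-1}\boldsymbol{\eta}(t) \le \tfrac12\boldsymbol{\vartheta}^T\Gamma^{-1}\boldsymbol{\vartheta}$, which holds since $\eta(t)\le\eta(0)\le\vartheta$. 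Hence $x(t)\in S_r \subseteq S$ for all $t\ge t_0$. The remaining detail is to confirm that \eqref{eq: eta dot} is indeed $\dot\eta(t)$ obtained by differentiating \eqref{err bound}, a routine chain-rule check using $\tfrac{d}{dt}\tan u = \sec^2 u \cdot \dot u$ and $\tfrac{d}{dt}(\cdot)^{\mu_e/2}$; I expect the main subtlety to be bookkeeping the sign of $Z$ and ensuring $\dot\eta(t)\le 0$ on $[0,T_\theta]$ so that the term $\mathrm{Tr}(\Gamma^{-1})\eta\dot\eta$ on the right-hand side of \eqref{RaCBF Condition} actually relaxes, rather than tightens, the constraint.
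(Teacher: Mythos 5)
Your proposal follows essentially the same route as the paper's proof: the time-varying RaCBF $h(x)-\tfrac{1}{2}\boldsymbol{\eta}(t)^T\Gamma^{-1}\boldsymbol{\eta}(t)$, the identity $\boldsymbol{\eta}^T\Gamma^{-1}\dot{\boldsymbol{\eta}}=\mathrm{Tr}(\Gamma^{-1})\eta\dot{\eta}$ for diagonal $\Gamma$, and the worst-case endpoint/projection argument giving $\Psi \leq L_{\Delta}h(x)\theta$, so that \eqref{RaCBF Condition} implies \eqref{robust forward-invariance}. Your closing set-inclusion reconciliation is muddled (membership in a superset of $S_r$ does not by itself place $x(t)$ in $S_r$), but this looseness is shared with the paper, whose proof likewise really establishes invariance of the time-varying tightened set, which yields $h(x(t))\geq\tfrac{1}{2}\boldsymbol{\eta}(t)^T\Gamma^{-1}\boldsymbol{\eta}(t)\geq 0$ and hence safety of $S$.
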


\begin{proof}
    We recall \eqref{RaCBF} and now replace the constant $\boldsymbol{\vartheta}$ with the time-varying $\boldsymbol{\eta}(t)$, where henceforth we drop the argument.
    Thus, $h_r(x,\boldsymbol{\eta}) = h(x) - \frac{1}{2}\boldsymbol{\eta}^T\Gamma^{-1}\boldsymbol{\eta}$, and $\dot{h}_r(x,\boldsymbol{\eta}) = \frac{\partial h_r}{\partial x}\dot{x} + \frac{\partial h_r}{\partial \boldsymbol{\eta}}\dot{\boldsymbol{\eta}}$. We will show that if \eqref{RaCBF Condition} holds then \eqref{robust forward-invariance} holds and thus \eqref{set robust} is forward-invariant. 
    
    First, $\frac{\partial h_r}{\partial x}\dot{x} = \frac{\partial h}{\partial x}\dot{x} = L_fh(x) + L_gh(x)u + L_\Delta h(x)\theta$, then with $\Gamma$ diagonal we can express $\frac{\partial h_r}{\partial \boldsymbol{\eta}}\dot{\boldsymbol{\eta}} = -\boldsymbol{\eta}^T\Gamma^{-1}\dot{\boldsymbol{\eta}} = -\mathrm{Tr}(\Gamma^{-1})\eta\dot{\eta}$, where
    we obtain $\dot{\eta}(t)$ in  \eqref{eq: eta dot} by differentiating \eqref{err bound} from Corollary \ref{Cor: upper bound parameter error}. 
    
    Next, we consider the case where $(\boldsymbol{\eta} + \hat{\theta}) \in \Theta$, for which $P_{\Theta}(\boldsymbol{\eta} + \hat{\theta}) = (\boldsymbol{\eta} + \hat{\theta})$. By \eqref{err bound} we have that $\hat{\theta}_i - \eta \leq \theta_i \leq \hat{\theta}_i + \eta$, $\forall i \in \{1,\hdots,p\}$, where $\Theta$ is convex by Assumption \ref{ass bounded parameters}. Thus, the solution of the following minimization problem represents the worst admissible effect of the unknown parameters on safety:
    \begin{equation}\label{eq: min phi}
        \phi^* = \argmin_{\phi \in \Theta} L_{\Delta}h(x)\phi
    \end{equation}
    This is a constrained linear program. As such, a unique minimizer, $\phi^* = [\phi_1^* \; \hdots \; \phi_p^*]^T$ exists, where by the fact that $L_\Delta h(x)\phi = \sum_{i = 1}^pC_i\phi_i$ with $C_i$ as the i$^{th}$ column of $L_{\Delta}h(x)$, we have that $\phi_1^*,\;\hdots\;,\phi_p^*$ are the minimizers of the following $p$ constrained linear programs: 
    \begin{equation}
        \phi_i^* = \argmin_{\hat{\theta}_i - \eta \leq \phi_i \leq \hat{\theta}_i + \eta}C_i\phi_i, \quad \forall i \in \{1,\hdots,p\}
    \end{equation}
    Furthermore, the solutions of constrained linear programs are guaranteed to be on the boundary of the solution domain, which in this case implies that either $\phi_i^* = \hat{\theta}_i - \eta$ or $\phi_i^* = \hat{\theta}_i + \eta$. Thus, 
    we denote $\Psi = L_{\Delta}h(x)\phi^*$ and recover \eqref{eq: safety projection} so that $\Psi \leq L_{\Delta}h(x)\theta$, $\forall \theta \in \Theta$. 
    
    For $(\boldsymbol{\eta} + \hat{\theta}) \notin \Theta$, \eqref{eq: min phi} is again solved, but now $P_{\Theta}(\boldsymbol{\eta} + \hat{\theta})$ reduces $\eta$ when necessary to enforce that $\phi \in \Theta$. We again obtain that a unique solution exists and that $\Psi \leq L_{\Delta}h(x)\theta$, $\forall \theta \in \Theta$. 
    
    For both cases we have that $L_fh(x) + L_gh(x)u + \Psi \leq \frac{\partial h}{\partial x}\dot{x}$ and thus \eqref{RaCBF Condition} implies \eqref{robust forward-invariance}, and \eqref{set robust} is forward-invariant.
\end{proof}

The use of the projection operator in Theorem \ref{Thm: Adaptive Safety} reduces the conservatism of the approach without compromising the robustness of the forward-invariance condition. 
\section{Case Study}\label{sec: case study}

\subsection{Comparing Controllers}

In the first numerical study, we investigate how our approach compares to other recent results in the literature, namely the adaptation laws from \cite{Taylor2019aCBF}, \cite{Lopez2020racbf}, and \cite{Zhao2020robustQP}, and the worst-case disturbance consideration of \cite{black2020quadratic}. As a basis for comparison, we consider a 2D single-integrator system subject to parametric uncertainty and challenge the controllers to 
safely achieve convergence to the origin by avoiding static obstacles separated by a small gap (Fig. \ref{Fig: Shoot the Gap Setup}). 

\begin{figure}[!ht]
    \centering
        \includegraphics[width=1\columnwidth,clip]{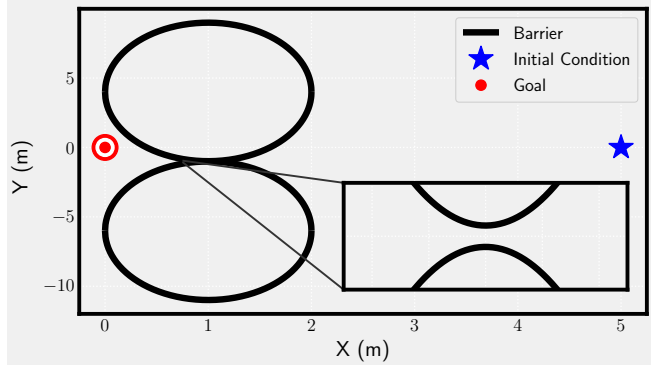}
    \caption{\small{Problem setup for the first numerical case study, "Shoot the Gap." The controller must determine what actions, $u_x$ and $u_y$, to take in order to realize safe trajectories from the Initial Condition to the Goal.}}\label{Fig: Shoot the Gap Setup}
    \vspace{-5mm}
\end{figure}

\subsubsection{Dynamics}

We denote $z = [x \; y]^T$ as the state, where $x$ and $y$ are the lateral and longitudinal position coordinates with respect to an inertial frame. The system dynamics are
\begin{align}\label{eq: simple dynamics}
    \dot{z} &= 
        \begin{bmatrix}
            1 & 0\\
            0 & 1
        \end{bmatrix}
        \begin{bmatrix}
            u_x\\
            u_y
        \end{bmatrix}
        +
        \Delta(z)
        \begin{bmatrix}
            \theta _1\\
            \theta _2
        \end{bmatrix},
\end{align}\normalsize

\noindent where the known regressor matrix is given by
\begin{align}
   \Delta(z) =  \nonumber K_{\Delta}\begin{bmatrix}
            1 + \sin^2(2\pi f_1 x) & 0 \\
            0 & 1 + \cos^2(2\pi f_2 y)
        \end{bmatrix}
\end{align}
\noindent with $\theta_1$, $\theta_2$ as constant parameters that are unknown a priori, and $K_{\Delta}$, $f_1$, $f_2$ given in Table \ref{table simple params}. Assumption \ref{ass bounded parameters} is enforced by defining lower and upper bounds $\ubar{\theta}$ and $\bar{\theta}$, respectively, and imposing $\ubar{\theta} \leq \theta_1,\theta_2 \leq \bar{\theta}$. The choice of $\Delta$ is such that $\Delta^T\Delta$ is positive-definite for all $z \in \R^2$, thus satisfying the PE condition and Assumption \ref{ass delta PE}.

\subsubsection{Control Formulation}

\begin{figure*}[!ht]
\hspace*{-0.20em}
\begin{subfigure}{0.34\textwidth}
    \includegraphics[width=1\textwidth,left]{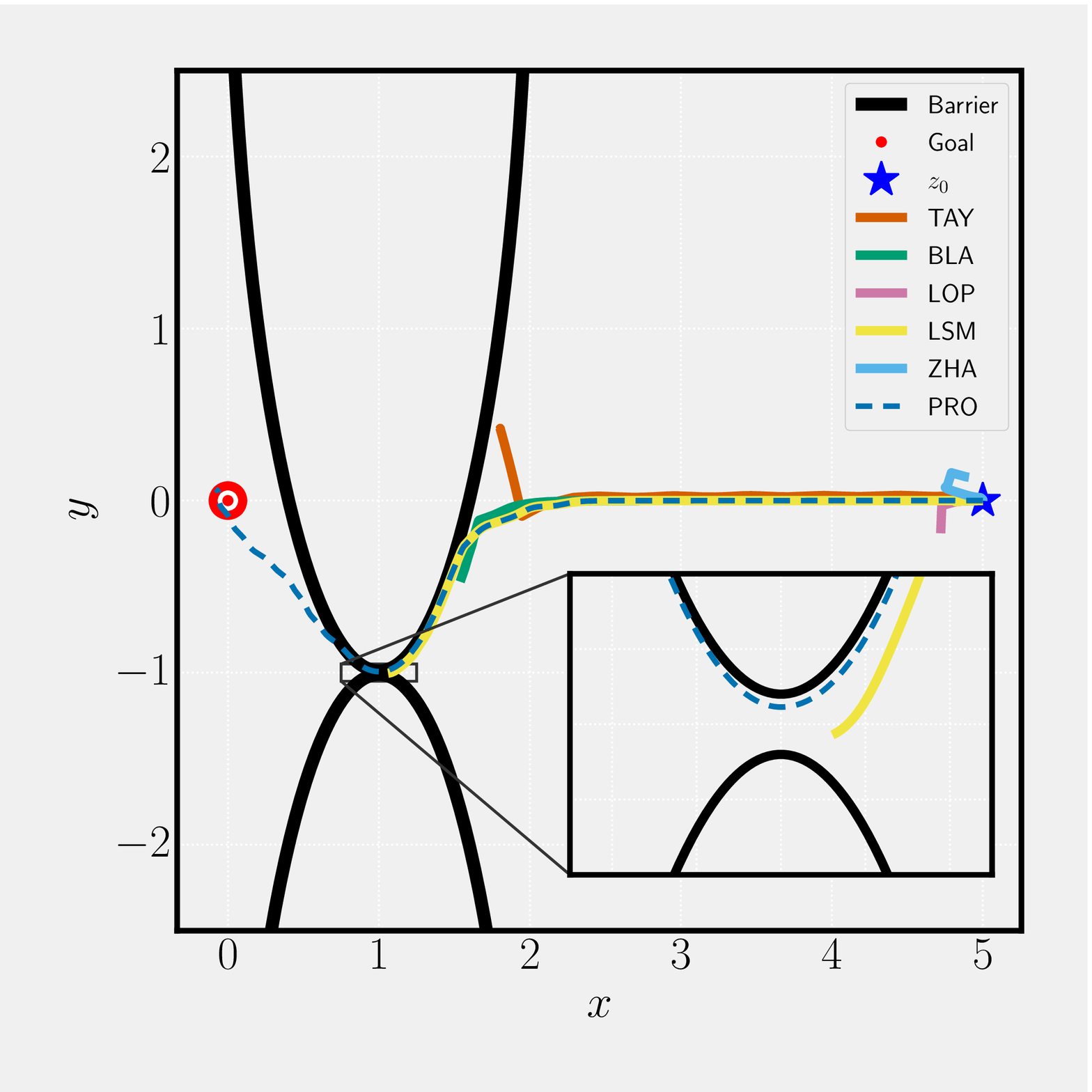}
    \caption{State trajectories}\label{fig: FxTS shoot the gap - trajectories}
\end{subfigure}%
\hspace*{-0.5em}
\begin{subfigure}{0.34\textwidth}
    \includegraphics[width=1\textwidth]{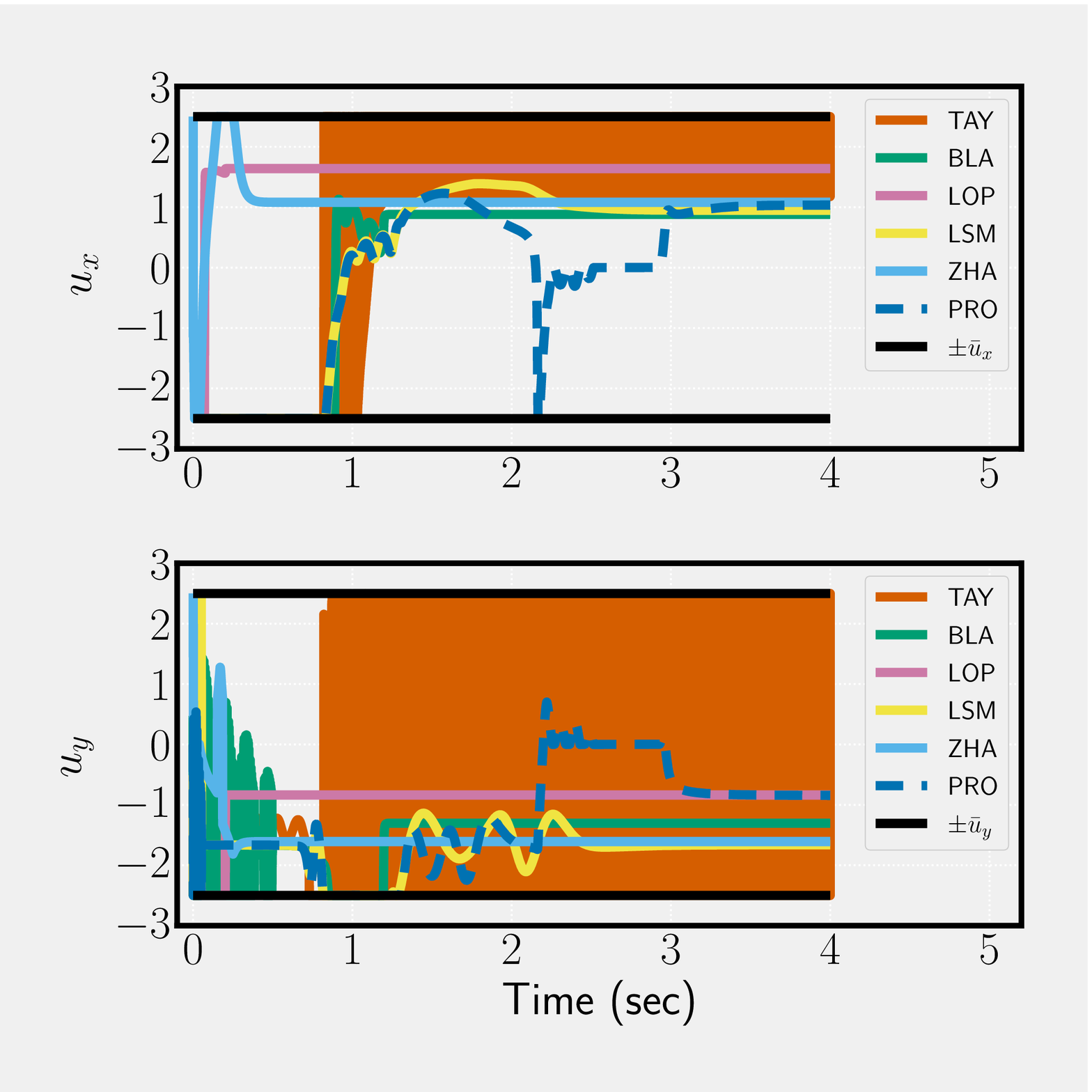}
    \caption{Control inputs}\label{fig: FxTS shoot the gap - controls}
\end{subfigure}%
\hspace*{-1.25em}
\begin{subfigure}{0.34\textwidth}
    \includegraphics[width=1\textwidth,right]{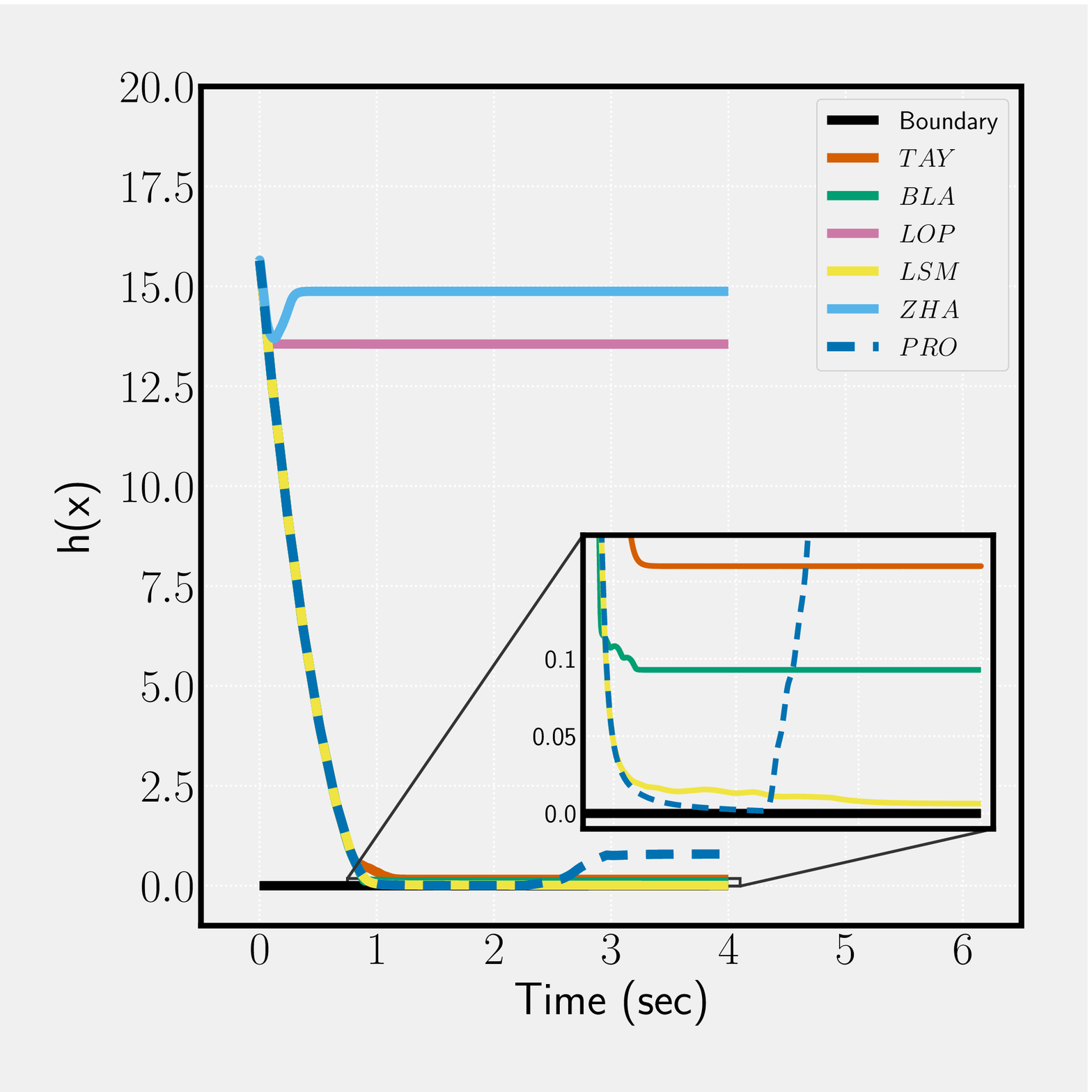}
    \caption{Value of level curve for safe set}\label{fig: FxTS shoot the gap - cbfs}
\end{subfigure}%
\caption{\small{State trajectories, control inputs, and control barrier function evolutions in time for the Shoot the Gap example.}}
\vspace{-5mm}
\end{figure*}

To encode the goal-convergence criterion we define the CLF:
\begin{align}\label{clf simple}
    V(z) = K_V(x^2 + y^2),
\end{align}
\noindent 
The safe states are those residing outside of the two ellipses shown in Figure \ref{Fig: Shoot the Gap Setup}, which results in the following two CBFs:
\begin{align}\label{cbfs simple}
    h_1(z) = \frac{(x - x_1)^2}{a^2} + \frac{(y - y_1)^2}{b^2} - 1 \\
    h_2(z) = \frac{(x - x_2)^2}{a^2} + \frac{(y - y_2)^2}{b^2} - 1
\end{align}

\noindent where $x_1$, $x_2$, $y_1$, $y_2$, $a$, and $b$ are parameters that define the location, size, and shape of the ellipses. 


We choose the CLF-CBF-QP framework (\hspace{-0.3pt}\cite{xu2015robustness,ames2014control}) for computing the control inputs. 
While we simulated the controllers from the literature both in their original form and with standardized FxT-CLFs to more fairly assess their abilities, no meaningful differences were observed in their ability to "shoot the gap." As such, we present results for the latter case.
Our control framework is then:
\small{
\begin{subequations}\label{CLF-CBF QP}
\begin{align}
    \min_{u,\delta _0,\delta _1,...,\delta_q} \frac{1}{2}u^{T}Qu &+p_0\delta _0^2 + \sum_{i=1}^q p_i\delta _i^2\\ 
    \nonumber &\textrm{s.t.} \\
    -\bar{u}_1 \leq &u_1 \leq \bar{u}_1 \label{ux constraint} \\
    -\bar{u}_2 \leq &u_2 \leq \bar{u}_2 \label{uy constraint} \\ 
    1 &\leq \delta_i \label{d1 constraint} \\
    \nonumber L_fV(z)+L_gV(z)u&+\phi(x,\Delta(x,t),\hat{\theta},\eta)\\
    \leq  \delta _0 &- c_1V(z)^{\gamma _1} - c_2V(z)^{\gamma _2} \label{pt_V}\\
    L_fh_i(z)+L_gh_i(z)u&+\psi(x,\Delta(x,t),\hat{\theta},\eta) \geq -\delta _ih_i(z) \label{pt_S1}
\end{align}
\end{subequations}}\normalsize

\noindent $\forall i \in \{1,\hdots,q\}$, where generally $u = [u_1 \; u_2]^T$ and for this problem $u_1=u_x$ and $u_2=u_y$, $\delta_0$ is a relaxation parameter on the performance objective whose inclusion guarantees feasibility of the QP, $\delta_i$ allows for larger negative values of $\dot{h}_i(z)$ away from the boundary of the safe set, and $p_i$ penalizes values of $\delta_i$, $\forall i \in \{0,...,q\}$.
The functions $\phi: \mathbb{R}^n \times \mathbb{R}^{n\times p} \times \mathbb{R}^p \times \mathbb{R}^p \rightarrow \mathbb{R}$ and $\psi: \mathbb{R}^n \times \mathbb{R}^{n\times p} \times \mathbb{R}^p \times \mathbb{R}^p \rightarrow \mathbb{R}$ represent the terms specific to the way each respective controller handles the uncertainty in the system dynamics. While all of \eqref{ux constraint}-\eqref{pt_S1} are linear in the decision variables, \eqref{ux constraint} and \eqref{uy constraint} enforce input constraints, \eqref{d1 constraint}
prevents over-conservatism in enforcing safety, \eqref{pt_V} encodes FxT convergence to the goal, and safety is guaranteed by \eqref{pt_S1}.

\subsubsection{Results}


The full set of parameters for this numerical case study\footnote{\label{github}Simulation code is accessible at \href{https://github.com/6lackmitchell/FxT_AdaptationLaw_ParametricUncertainty}{Github: https://tinyurl.com/y3xhylug}.} are provided in Table \ref{table simple params}.
{\small
\begin{table}[h!]
    \setlength{\tabcolsep}{4pt}
    \renewcommand{\arraystretch}{1.25}
    \centering
    \caption{Shoot the Gap Parameters}\label{table simple params}
    \begin{tabular}{|c|c|c|c|c|c|c|c|c|c|}
        \hline
        $\dot{x}$ & Val & QP & Val & CBF & Val & CLF & Val & $\dot{\hat{\theta}}$ & Val \\ \hline
        $f_1$ & 1  & $Q$ & $I_{2\times2}$ & $a$ & 1 & $K_V$ & 1 & $k_e$ & 0.001 \\ \hline
        $f_2$ & 4 & $p_0$ & 50 & $b$ & 4.99 & $T$ & 4 & $T_e$ & 0.2 \\ \hline
        $\theta_1$ & -1 & $p_1$ & 5 & $x_1$ & 1 & $\mu$ & 5 & $\mu_e$ & 5 \\ \hline
        $\theta_2$ & 1 & $p_2$ & 5  & $x_2$ & 1 & $c_1$ & 1.963 & $c_{1e}$ & 50 \\ \hline
        $\bar{\theta}_1$ & 10 & $\bar{u}_1$ & 2.5 & $y_1$ & -6 & $c_2$ & 1.963 & $c_{2e}$ & 50  \\ \hline
        $\bar{\theta}_2$ & 10 & $\bar{u}_2$ & 2.5 & $y_2$ & 4 & $\gamma_1$ & 0.8 & $\gamma_{1e}$ & 0.8  \\ \hline
        $K_{\Delta}$ & 0.833 & & & & & $\gamma_2$ & 1.2 & $\gamma_{2e}$ & 1.2 \\ \hline
        & & & & & & & & $\ell_e$ & 100\\
        \hline 
    \end{tabular}
    \vspace{-4mm}
\end{table}}

We endeavor to demonstrate that by learning the true values of the uncertain parameters in the system dynamics of \eqref{eq: simple dynamics}, our method is capable of approaching the boundary of the safe set more closely than previous results in the literature and, as a consequence, able to reach a goal which may require such an approach despite uncertainty. Table \ref{tab: key other works} provides the legend codes used to refer to these other works.

\begin{table}[h!]
    \centering
    \caption{Controllers from the Literature}\label{tab: key other works}
    \begin{tabular}{|c|c|c|c|c|c|c|c|}
        \hline
        Authors & Citation & Legend Code \\ \hline
        Taylor et al. & \cite{Taylor2019aCBF} & TAY \\ \hline
        Black et al. & \cite{black2020quadratic} & BLA \\ \hline
        Lopez et al. & \cite{Lopez2020racbf} (w/o SMID) & LOP \\ \hline
        Lopez et al. & \cite{Lopez2020racbf} (w/ SMID) & LSM \\ \hline
        Zhao et al. & \cite{Zhao2020robustQP} & ZHA \\ \hline
        Proposed Method &  & PRO \\ \hline
    \end{tabular}\par
    \vspace{2mm}
    Note: \cite{Lopez2020racbf} presents RaCBF-based control formulations with and without SMID for parameter estimation. We have considered both cases.
    \vspace{-3mm}
\end{table}


First, we observe that in accordance with Theorem \ref{Thm: FxT Parameter Adaptation}, Figure \ref{fig: theta hats} highlights that the parameter estimates, $\hat{\theta}$, do in fact converge to their true values within fixed-time $T_{\theta}$ given by \eqref{tighter t bound}.
\begin{figure}[!h]
    \centering
        \includegraphics[width=1\columnwidth,clip]{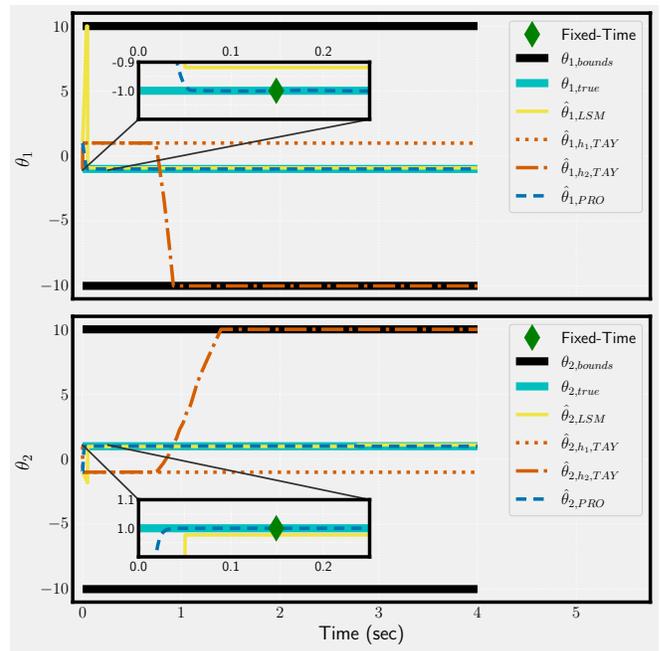}
    \caption{\small{Estimates of the unknown model parameters, $\theta$.}}\label{fig: theta hats}
    \vspace{-5mm}
\end{figure}
Figure \ref{fig: FxTS shoot the gap - trajectories} shows that our proposed method "shoots the gap" where the others do not;
that is, our method can tolerate regions of the state space which exist in close proximity to the boundary of the safe region. As such, it fulfills its specification of FxT convergence to the origin.
In this sense, our synthesized adaptation law and RaCBF-based controller is less restrictive than the existing literature.




\subsection{Highway Overtake}\label{sec: overtake}

We now consider an automobile highway overtake problem, similarly to \cite{black2020quadratic}, and show how our control formulation can guarantee success of the overtake maneuver under uncertainty, where the robust CBF approach cannot.

\subsubsection{Dynamics}

Just as in \cite{black2020quadratic}, we model the vehicles as kinematic bicycles using the model from \cite{Rajamani2012VDC}. Accordingly, the state vectors are $z_i = [x_i\;y_i\;\theta_i\;v_i]^T$, where $x$ and $y$ are planar Euclidean coordinates (longitudinal and transverse), $\theta$ is the heading angle, $v$ is velocity, and the subscript $i \in \{e,l\}$ denotes belonging to the Ego or Lead vehicle. The corresponding dynamical system is described by:
\small{
\begin{align}\label{eq: overtake_dynamics}
    \dot z_i = 
        \begin{bmatrix}
            v_i \ \cos(\theta_i) \\
            v_i \ \sin(\theta_i) \\
            0 \\  0
        \end{bmatrix}
        + 
        \begin{bmatrix}
            0 & 0 \\
            0 & 0 \\
            1 & 0 \\
            0 & 1 / M
        \end{bmatrix}
        \begin{bmatrix}
            \omega_i \\ a_i\end{bmatrix}+ \Delta_i(z)\theta,
\end{align}
}\normalsize

\noindent where $M$  is the mass of the vehicle in kg, and $\omega_i$ and $a_i$ represent the angular velocity and heading acceleration control inputs, for which the bounds $\ubar{\omega} \leq \omega_i \leq \bar{\omega}$ in rad$^{-1}$ and $\ubar{a} \leq a_i \leq \bar{a}$ in m/s$^2$ hold. For reference, all overtake parameter values may be found in Table \ref{table overtake params}\footnote{\label{mustang}$M$, $l_c$, and $w_c$ taken from the \href{https://www.ford.com/cars/mustang/models/shelby-gt350r}{2020 Ford Mustang Shelby GT: https://tinyurl.com/yxhn63of}.}.
We elect to model erratic, or distracted, driver behavior by the addition of the uncertain term $\Delta_i(x)\theta$, where $\Delta_i: \mathbb{R}^{4} \rightarrow \mathbb{R}^4 \times \mathbb{R}^2$ is the known regressor matrix. As such, we let $\Delta_e = \boldsymbol{0}_{n \times p}$ and $\Delta_l = \boldsymbol{0}_{n \times p}$ with the exception of $\Delta_{l,(0,0)}(z) = 1 + \frac{1}{2}(1 - \cos(2\pi f_{l,1}x_l))$ and $\Delta_{l,(1,1)}(z) = \frac{1}{10} + \frac{1}{20}(1 - \sin(2\pi f_{l,2}x_l))$.


\subsubsection{Problem Formulation}

We define the safe sets as:

\begin{equation}
    S_i = \{z \; | \; h_i(z) \geq 0\}, \; \forall i \in \{1,2,3\}
\end{equation}
where
\begin{align}
    h_1(z) &= K_s\left((y_e - E_R(z))(E_L(z) - y_e) \right)\label{cbf road}\\
    h_2(z) &= L - v\label{cbf speed}\\
    h_3(z) &= \left( \frac{x_e - x_l}{s_{x}}\right)^2 + \left( \frac{y_e - y_l}{s_{y}}\right)^2 - 1\label{cbf car}
\end{align}
and
\small{
\begin{align}
    E_R(z) = e_r &+ \frac{\theta_ev_e\sin(\theta_e)}{\bar{\omega}} \nonumber \\
    &- \frac{\theta_e^2}{2\bar{\omega}^2}\left(\frac{\bar{a}\sin(\theta_e)}{M} + v_e\bar{\omega}\cos(\theta_e)\right)\label{edge1}\\
    E_L(z) = e_l &- \frac{\theta_ev_e\sin(\theta_e)}{\bar{\omega}} \nonumber\\
    &- \frac{\theta_e^2}{2\bar{\omega}^2}\left(\frac{\bar{a}\sin(\theta_e)}{M} - v_e\bar{\omega}\cos(\theta_e)\right)\label{edge2}
\end{align}}\normalsize
where $e_r$ and $e_l$ denote the physical edges of the right and left side of the road such that \eqref{edge1} and \eqref{edge2} imply that \eqref{cbf road} encodes that the Ego vehicle remain on the road despite bounded steering control. We also have that \eqref{cbf speed} enforces the road speed limit, $L$, in m/s, and \eqref{cbf car} ensures that safety margins $s_x = \tau v_e\cos(\theta_e) + l_c$ and $s_y = w_c + 0.75$ between vehicles are observed, where $l_c$ and $w_c$ are the length and width of the vehicles in m. Then, $S = \cap S_i$, $\forall i \in \{1, 2, 3\}$.

In addition, Oncoming vehicles are known to obey the following pattern: the first vehicle has a time-headway of 24s with the Lead Vehicle, and subsequent Oncoming vehicles arrive in 30s intervals. Consequently, the Ego vehicle must complete the overtake within 24s to proceed at the outset, and within 30s to proceed after the first Oncoming vehicle. 

We now formally define the overtake problem.

\begin{Problem}
    Given the initial states, $z_e(0)$, $z_l(0)$, the time headway of an oncoming vehicle, $T_h$, and the set $\Theta$ to which the unknown parameter vector, $\theta$, belongs, determine whether it is safe for the Ego vehicle to overtake the Lead vehicle, i.e. whether there exist $z(t)$, $u_e(t) \in \mathcal{U} = \{(\omega_e, a_e) \; | \; \ubar{\omega} \leq \omega_e \leq \bar{\omega}, \; \ubar{a} \leq a_e \leq \bar{a}\}$ such that $z_e(t) \in S$, $\forall t \in [0,T]$, where $T$ is the upper bound on time to complete the overtake. If safe and $T \leq T_h$, design a control input, $u_e(t) \in \mathcal{U}$ for the given $z(0)$ such that the Ego vehicle overtakes the Lead vehicle.
\end{Problem}


\subsubsection{Control Formulation}
Just as in \cite{black2020quadratic}, we partition the problem into the following sub-problems:

\begin{enumerate}\label{list: subproblems}
    \item Ego Vehicle approaches Lead Vehicle
    \item Ego Vehicle merges into overtake lane
    \item Ego Vehicle advances beyond Lead Vehicle
    \item Ego Vehicle merges back into original lane
\end{enumerate}

\noindent We use the CLF-CBF-QP control framework presented in \eqref{CLF-CBF QP} to compute the control inputs, $u_1=\omega_e$ and $u_2=a_e$, pointwise-in-time where $q=3$ in accordance with $h_1(z)$, $h_2(z)$, and $h_3(z)$ in \eqref{cbf road}-\eqref{cbf car}. 
Our CLF is:
\begin{equation}
    V(z) = K_V(k_x\bar{x}^2 + k_y\bar{y}^2 + k_\theta\bar{\theta}^2 + k_v\bar{v}^2 - 1)
\end{equation}
where $\bar{x}=x-x_d$, $\bar{y}=y-y_d$, $\bar{\theta}=\theta-\theta_d$, and $\bar{v}=v-v_d$, and $z_d = [x_d \; y_d \; \theta_d \; v_d]^T$ is the desired state. We define the fixed-time convergence times for the four sub-problems as $T_1 = 3$, $T_2 = 5$, $T_3 = 7$, and $T_4 = 5$ respectively. 

{\small
\begin{table}[h!]
    \setlength{\tabcolsep}{3.5pt}
    \centering
    \caption{Overtake Parameters}\label{table overtake params}
    \begin{tabular}{|c|c|c|c|c|c|c|c|c|c|}
        \hline
        $\dot{x}$ & Val & QP & Val & CBF & Val & CLF & Val & $\dot{\hat{\theta}}$ & Val \\ \hline
        $M$ & 1994 & $Q_{0,0}$ & 1/$\bar{\omega}^2$ & $e_r$ & 0 & $K_V$ & 10$^{-5}$ & $k_e$ & 0.001 \\ \hline
        $f_{l,1}$ & 0.01 & $Q_{1,1}$ & 1/$\bar{a}^2$ & $e_l$ & 6 & $\mu$ & 5 & $T_e$ & 0.2 \\ \hline
        $f_{l,2}$ & 0.02 & $p_0$ & 5$\times10^{8}$ & $L$ & 30 & $\gamma_1$ & 0.8 & $\mu_e$ & 5 \\ \hline
        $\theta_1$ & 1 & $p_1$ & 1 & $l_c$ & 4.81 & $\gamma_2$ & 1.2 & $c_{1e}$ & 50 \\ \hline
        $\theta_2$ & 0 & $p_2$ & 1 & $w_c$ & 1.92 & $k_x$ & 0.0625 & $c_{2e}$ & 50  \\ \hline
         & & $p_3$ & 1 & $\tau$ & 1.8 & $k_y$ & 100 & $c_{2e}$ & 50  \\ \hline
         & & $\bar{\omega}$ & 0.175 & & & $k_\theta$ & 400 & $\gamma_{1e}$ & 0.8  \\ \hline
         & & $\bar{a}$ & 4890 & & & $k_v$ & 1 & $\gamma_{2e}$ & 1.2 \\ \hline
         & & & & & & & & $\ell_e$ & 100\\
        \hline 
    \end{tabular}
    Note: $Q_{i,j}$ denotes the value of the row $i$ column $j$ entry for the $Q$ matrix. Non-specified entries are uniformly zero.
    \vspace{-2mm}
\end{table}
}

\subsubsection{Results}

\begin{figure*}[!h]
    \centering
        \includegraphics[width=1\textwidth,clip]{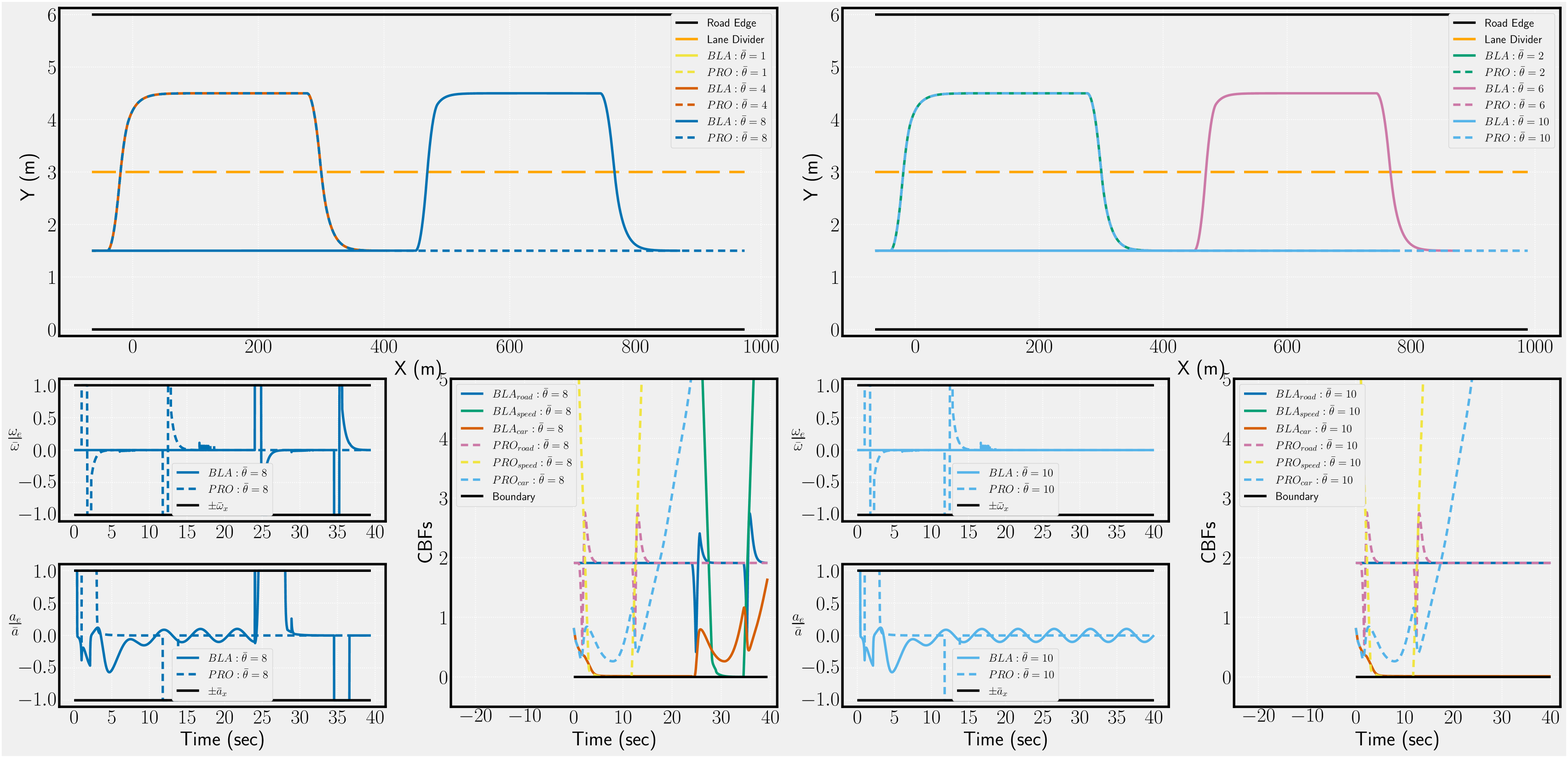}
    \caption{\small{Results for 6 different simulations of the overtake problem. The top row displays state trajectories. The bottom row contains control inputs and CBF trajectories for cases where \cite{black2020quadratic} must postpone the overtake maneuver (left) and cannot complete safely complete it (right).}} \label{fig: overtake}
    \vspace{-6mm}
\end{figure*}

The scenario was initialized as $x_e(0) = -64.8$, $y_e(0) = 1.5$, $\theta_e(0) = 0$, $v_e(0) = 24$, $x_l(0) = 0$, $y_l(0) = 1.5$, $y_\theta(0) = 0$, and $v_l(0) = 19$. For all considered sets of admissible parameters, $\Theta$, we set $\bar{\theta} = \bar{\theta}_1 = -\ubar{\theta}_1 = \bar{\theta}_2 = -\ubar{\theta}_2$, and chose $\bar{\theta} = 1, 2, 4, 6, 8, 10$. Table \ref{table overtake results} shows how the fixed-time horizon grows for BLA as $\bar{\theta}$ increases.

\begin{table}[h!]
    \centering
    \caption{Overtake Fixed-Time Horizons}\label{table overtake results}
    \begin{tabular}{|c|c|c|c|c|c|c|c|c|c|}
        \hline
        $\bar{\theta}$ & $T_{PRO}$ & $T_{BLA}$ & $t_{oncoming,1}$ & $t_{oncoming,i}$ \\ \hline
        1 & 20 & 20.74 & 24 & 24 + 30(i-1) \\ \hline
        2 & 20 & 21.31 & 24 & 24 + 30(i-1) \\ \hline
        4 & 20 & 22.67 & 24 & 24 + 30(i-1) \\ \hline
        6 & 20 & 24.42 & 24 & 24 + 30(i-1) \\ \hline
        8 & 20 & 26.80 & 24 & 24 + 30(i-1) \\ \hline
        10 & 20 & 30.38 & 24 & 24 + 30(i-1)\\
        \hline 
    \end{tabular}
    \vspace{-2mm}
\end{table}

As such, the PRO technique completes the overtake without delay for all parameter bounds, whereas the BLA controller appropriately proceeds immediately with the overtake when $\bar{\theta} = 1,2,4$, proceeds after the first oncoming vehicle has passed when $\bar{\theta} = 6,8$, and cannot guarantee a safe overtake when $\bar{\theta} = 10$. This is precisely the advantage of our proposed controller. Because it is guaranteed to adaptively learn the true parameters within fixed-time, it is able to successfully complete the overtake maneuver for all considered sets, $\Theta$. 


\section{Conclusion}
 In this study on the efficacy of various techniques for safe control under parametric model uncertainty, we presented a novel adaptation law that learns the uncertain parameters associated with a class of nonlinear, control-affine dynamical systems in fixed-time. We synthesized our parameter adaptation law with a robust, adaptive CBF-based controller in the form of a quadratic program, and provided an upper bound on the parameter estimation error as an explicit function of time. We then studied the performance of our method on a simple, 2D single integrator system in relation to several recent works from the literature and demonstrated that our contribution succeeds in navigating near unsafe regions where the others fail. We further illustrated the promise of our method in applications where a decision on whether to initiate a possibly unsafe maneuver is required, using the automobile overtake problem as a case study.

In the future, we intend to study cases for which the uncertain parameters are time-varying and an upper bound is not known a priori, as we recognize that these may have broader applicability to real-world scenarios.

\bibliographystyle{IEEEtran}
\bibliography{myreferences}



\end{document}